\numberwithin{equation}{section}
\newtheorem{theorem}{Theorem}[section]
\newtheorem{definition}[theorem]{Definition}
\newtheorem{proposition}[theorem]{Proposition}
\theoremstyle{remark}
\newtheorem{remark}{Remark}
\def\div{ \hbox{\rm div}\,  }
\def\curl{ \hbox{\rm curl}\,  }
\def\R{{\mathbb R}}
\def\eps{\varepsilon}
\begin{document}
\title[\hfilneg \hfil ]
{Global weak solutions to the density-dependent Hall-magnetohydrodynamics system}

  \author[Tan]{ \text{Jin Tan}}
 \address[Jin Tan]{\newline Univ Paris Est Creteil,  CNRS, LAMA, F-94010 Creteil,  France \newline
 \& Univ Gustave Eiffel, LAMA, F-77447 Marne-la-Vall{\'e}e, France}
 \email{jin.tan@u-pec.fr}
 
 \subjclass[2010]  {35D30; 35Q35; 76D05; 76W05}
\keywords  {Hall-MHD,  Weak solutions, Weak-strong uniqueness.}

\begin{abstract}
We are concerned with the global existence of  finite energy weak solutions to 3D  density-dependent magnetohydrodynamics (MHD) system with Hall-effect  set in a  general smooth bounded domain. The perfectly conducting wall boundary condition is imposed on the magnetic field.
Due to the degeneracy of Hall-effect term (a tri-linear term) in  vacuum, we  assumed initial density lies in the bounded function space and having a positive lower bound. Particularly, these bounds are preserved by the density transport equation  which helps yield a satisfying compactness argument of  the magnetic field. 
\end{abstract}

\maketitle
\section{Introduction and main results}
\subsection{Introduction}
 In this paper, we consider the following three dimensional density-dependent or inhomogeneous incompressible Magnetohydrodynamics system  that includes the Hall-effect (Hall-MHD):
\begin{align}
&\partial_t\rho+\div(\rho \textbf{u})=0\label{1.1},\\
&\partial_t{(\rho \textbf{u})}+\div(\rho \textbf{u}\otimes\textbf{u})-\div(2\mu d(\textbf{u}))+\nabla P= \curl \textbf{B}\times \textbf{B}\label{1.2},\\
&\div\textbf{u}=0\label{1.3},\\
&\partial_t{ \textbf{B}}+\curl(\textbf{B}\times \textbf{u}+{h}\dfrac{\curl \textbf{B}\times \textbf{B}}{\rho})=-\curl(\dfrac{\curl \textbf{B}}{\sigma})\label{1.4}. 
\end{align}
The unknowns are the density of the fluid $\rho$, the fluid velocity $\textbf{u}\in\R^3$, the magnetic field $\textbf{B}\in\R^3$ and the scalar pressure $P.$ We denote by $d(\textbf{u})=\frac12(\nabla\textbf{u}+\nabla\textbf{u}^T)$ the shear rate tensor,  $\mu=\mu(\rho)$ the fluid viscosity and $\sigma=\sigma(\rho)$ its electrical conductivity (this dependence  enables us to consider the motion of several immiscible fluids with various viscosities and conductivities), both conductivity and viscosity being positive continuous functions on the $[0, \infty)$ . The dimensionless number ${h}$ measures   the 
magnitude  of the so-called Hall-effect term: $\curl(h\dfrac{\curl \textbf{B}\times \textbf{B}}{\rho}),$ compared to  the typical length scale of the fluid.

The above system is used to model the evolution of electrically conducting fluids such as 
plasmas or electrolytes (then, ${\bf u}$ represents the ion velocity) and takes into account 
 the fact that in a moving conductive fluid,  the magnetic field can induce currents which, in turn, polarize the fluid and change the magnetic field. In the work of Acheritogaray, Degond, Frouvelle and Liu  \cite{{MR2861579}}, they derived the following generalized Ohm's law from the two-fluids Navier-Stokes-Maxwell model under suitable scaling hypotheses:
$$\textbf{j}=\sigma(\textbf{E}+\textbf{u}\times\textbf{B}+\nabla(\ln \rho)-h\dfrac{\textbf{j}\times\textbf{B}}{\rho}),$$
where $\textbf{j}=\curl\textbf{B}$ is the current density and $\textbf{E}$ the electric field. Jang and Masmoudi in \cite{Ja12} also gave a derivation of Hall-effect.
The Maxwell-Farady equations:
$$\partial_t \bf{B}+\curl \bf{E}=0$$
with above generalized Ohm's law then gives rise to \eqref{1.4}.
  Compared with the classical inhomogeneous incompressible MHD equations, the  density-dependent Hall-MHD system have an additional Hall-effect term which is believed to be the key for understanding the problem of magnetic reconnection, as observed in space plasmas, star formation, neutron stars and geo-dynamo  (see for example \cite{{Braiding2004}, Fo91, Ho05, Hu03}). 
Meanwhile, since Hall-effect term is a  tri-linear term and degenerate in vacuum, it makes  the mathematical analysis of the density-dependent Hall-MHD system  much more complicated.

\smallbreak
The primary aim of this paper is to establish the existence of global weak solutions that could be called $``\mathbf{solutions}~ \grave{\mathbf a}~ \mathbf{la~ Leray}"$ to the density-dependent Hall-MHD system by analogy with the classical global existence results for the incompressible homogeneous Navier-Stokes equations obtained by Leray \cite{JL34}
and the density-dependent Navier-Stokes equations by Lions \cite{PL96}. 
Without taking into consideration of Hall-effect term (i.e. $h=0$), many works have already been  addressed. For instance, with constant density, global existence for standard viscous resistive incompressible MHD system has been previously proved by Duvaut and Lions \cite{DL72}, see also Sermange and Temam \cite{ST83},  with inhomogeneous density Gerbeau and Le Bris \cite{GB97} proved existence of a global weak solution. Consider incompressible homogeneous Hall-MHD system,  the global existence of Leray-Hopf weak solutions to the periodic case is first studied  in \cite{MR2861579}, later, Chae et al. \cite{MR3208454} have treated $\R^3$ case  as well as the local well-posedness of classical solutions with initial data in regular Sobolev spaces. Weak-strong uniqueness and energy identity have been investigated by  Dumas and Sueur in \cite{MR3227510}.  Partial regularity have been studied by  Chae and Wolf in a series works \cite{Chae-P-1, Chae-P-2, Chae-P-3} and  later by Zeng and Zhang \cite{ZZ}. Very recently,  Danchin and the author in \cite{D-Besov,D-Sobolev}  have first established existence results for initial data with critical regularity in Besov spaces and Sobolev spaces, Dai \cite{Dai19} shows  a non-uniqueness result for weak solutions having Leray-Hopf type regularity.
And Zhang \cite{Zh20}  proved existence of  weak solutions that corresponds to Fujita-Kato solutions to the 3D inhomogeneous incompressible Navier-Stokes equations.

As far as we know, there are few  existence results for the  Hall-MHD system in a general bounded domain. Unlike  MHD system in a bounded domain,  the appearance of the Hall-effect term gives us  additional difficulties in non-linear analysis and on boundary  condition for magnetic field. Let $\Omega $ be a smooth, bounded, fixed connected open subset  of $\R^3.$ We shall denote by $\textbf{n}$ the outward-pointing normal to $\Omega.$
For the boundary conditions, we consider the no-slip boundary condition for the fluid velocity $\textbf{u}$ :
\begin{equation}\label{bcu}
\textbf{u}=0  \hspace*{0.5cm}~~{\rm{on}}~~\partial\Omega,
\end{equation}
the perfectly conducting wall boundary condition for the $\textbf{B}$ field (see \cite{MR2861579}) :
\begin{equation}\label{bcb}
\left\{
\begin{aligned}
&\mathbf B\cdot \mathbf n=0\hspace*{4cm}~~{\rm{on}}~~\partial\Omega,\\
&(h\dfrac{\curl\textbf{B}\times\textbf{B}}{\rho}+\dfrac{\curl\textbf{B}}{\sigma})\times\mathbf n=0 \hspace*{0.5cm}~~{\rm{on}}~~\partial\Omega,
\end{aligned}
\right.
\end{equation}
that is assuming zero normal component of the magnetic field and zero tangential component of the electric field. This non-linear boundary condition has been  emphasized by Chae et al. in \cite{MR2861579, {MR3186849}} for homogeneous case. Indeed,  it seems hard to study Hall-MHD and related system in a general bounded domain since this non-linear boundary condition.  To our best knowledge,  the only well-posedness result of strong solutions with perfectly conducting wall boundary condition is due to Mulone and Solonnikov \cite{MS95}.
Recently, Han et al. \cite{Han1}  considered slip  boundary condition for $\bf{u}$ and no-slip boundary condition for $\bf{B}$  and established global weak solutions to the Hall-MHD system with the ion-slip effect in a bounded domain. Later, Han and Hwang \cite{Han2} imposed a new boundary condition and proved local well-posedness of strong solutions with a regularity criteria.
However, as we only consider solutions defined in the weak sense,  the non-linear boundary condition is avoided.

Due to the degeneracy of Hall-effect term in  vacuum, we shall suppose in this paper that initial density does not vanish. Thanks to the nature of \eqref{1.1} with \eqref{1.3} this property will be preserved along the time and it is crucial for our later analysis. If there exists vacuum, we have no idea how to make sure the Hall-effect term is well-defined. Let us look at the compressible Hall-magnetohydrodynamics system in e.g. \cite{Fa15, Xiang17}  for a while, 
\begin{align*}
&\partial_t\rho+\div(\rho \textbf{u})=0,\\
&\partial_t{(\rho \textbf{u})}+\div(\rho \textbf{u}\otimes\textbf{u})-\mu\Delta\mathbf{u}+\nabla P= \curl \textbf{B}\times \textbf{B},\\
&\partial_t{ \textbf{B}}+\curl(\textbf{B}\times \textbf{u}+h\dfrac{\curl \textbf{B}\times \textbf{B}}{\rho})=\Delta\mathbf{B}, \\
&\div \textbf{B}=0.
\end{align*}
The celebrated results of Lions \cite{PL98} and extended by Fereisl \cite{Fe03} on the weak solutions for compressible Navier-Stokes equations may not be successfully applied to the above model, since without the incompressibility condition \eqref{1.3} on the velocity field we are not able to control vacuum regions even if we assume there is no vacuum at beginning. 
Still, global low-energy weak solutions of  3D compressible MHD equations with density positive and essentially bounded were established in \cite{Suen12}. This motivates us to study the 
existence of global weak solutions of the above Compressible Hall-MHD system with initial velocity small in $L^2(\mathbb{R}^3).$

\subsection{A priori estimate and functional spaces}
Different from the work of Gerbeau and Le Bris, where they only assumed positive initial density,  we need to assume  initial density having a positive lower bound for technical reason (see Remark. \ref{Re-low-bound-rho}). Given that $\inf \rho_0 >0,$ it is reasonable to make an initial hypothesis on velocity not momentum.  We thus impose the following initial conditions:
\begin{align}
&\rho|_{t=0}=\rho_0\geq \underline{\rho}>0 \hspace*{0.5cm}~~{\rm{on}}~~\Omega\label{ind},\\
&\textbf{u}|_{t=0}=\textbf{u}_0 \hspace*{1.7cm}~~{\rm{on}}~~\Omega\label{inm},\\
&\textbf{B}|_{t=0}=\textbf{B}_0  \hspace*{1,6cm}~~{\rm{on}}~~\Omega\label{inma}.
\end{align}
And we shall suppose in the sequel that for $\xi\in[\underline{\rho}, \infty),$
\begin{equation}\label{positivemu}
0<\underline{\mu}\leq\mu(\xi)\leq \bar\mu,
\end{equation}
\begin{equation}\label{positivesigma}
0<\underline{\sigma}\leq\sigma(\xi)\leq\bar{\sigma}.
\end{equation}
Now, we formally derive an a priori energy estimate. We first remark that \eqref{1.1} and the incompressibility condition \eqref{1.3} immediately imply that
\begin{equation*}
\underline{\rho}\leq\rho(t, x)\leq\|\rho_0\|_{L^\infty},\quad{\rm{a.e.\,\,\mathit{t},\,\,\mathit{x}}}.
\end{equation*}
Next, recall that for all velocity fields $\textbf{u}$ and density $\rho$
 \begin{equation}\label{v1}
 \div(\rho\textbf{u}\otimes \textbf{u})=\textbf{u}\,\div(\rho\textbf{u})+\rho\textbf{u}\cdot\nabla\textbf{ u},
 \end{equation}
 in the sense of distributions on $\Omega.$
Moreover, we shall make frequent use of the following formula of vector analysis : for all vector fields $\textbf{v}$ and $\textbf{w}$ in $\R^3$ we have
\begin{equation}\label{v2}
\int_{\Omega} \curl \textbf{v}\cdot \textbf{w}\,dx = \int_{\partial\Omega} (\textbf{n}\times \textbf{v})\cdot \textbf{w}\,dS+\int_{\Omega} \textbf{v}\cdot\curl  \textbf{w}\,dx,
\end{equation}
whenever these integrals make sense. Here $dS$ is the standard surface measure of $\partial\Omega.$

Thanks to \eqref{1.1} and \eqref{v1}, we multiply \eqref{1.2} by $\textbf{u},$  integrate over $\Omega$ and use boundary  condition \eqref{bcu} to get
\begin{equation}\label{enu}
\frac12\dfrac{d}{dt}\int_{\Omega}\rho|\textbf{u}|^2\,dx+\frac12\int_{\Omega}\mu(\rho)|\nabla\textbf{u}+\nabla\textbf{u}^T|^2\,dx
=\int_{\Omega}(\curl \textbf{B}\times \textbf{B})\cdot \textbf{u}\,dx.
\end{equation}
 Multiplying \eqref{1.4} by $\textbf{B}$, integrating over $\Omega,$ using \eqref{v2}, we have
 \begin{align*}
 &\frac12\dfrac{d}{dt}\int_{\Omega}|\textbf{B}|^2\,dx+\int_{\Omega}\bigl(\textbf{B}\times \textbf{u}+h\dfrac{\curl\textbf{B}\times \textbf{B}}{\rho}+\dfrac{\curl\textbf{B}}{\sigma(\rho)}\bigr)\cdot\curl\textbf{B}\,dx\\
 &\quad=-\int_{\partial\Omega}\bigl(\textbf{n}\times\bigl(\textbf{B}\times \textbf{u}+h\dfrac{\curl\textbf{B}\times \textbf{B}}{\rho}+\dfrac{\curl\textbf{B}}{\sigma(\rho)}\bigr)\bigr)\cdot\textbf{B}\,dS.
 \end{align*}
Thus, using the facts that
$$(\curl\textbf{B}\times\textbf{B})\cdot\curl\textbf{B}=0,$$
$$(\textbf{B}\times\textbf{u})\cdot\curl\textbf{B}=(\curl\textbf{B}\times\textbf{B})\cdot\textbf{u},$$
and boundary conditions \eqref{bcu}, \eqref{bcb} we get
 \begin{equation}\label{enb}
\frac12\dfrac{d}{dt}\int_{\Omega}|\textbf{B}|^2\,dx+\int_{\Omega}\dfrac{|\curl\textbf{B}|^2}{\sigma}\,dx=-\int_{\Omega}(\curl\textbf{B}\times\textbf{B})\cdot\textbf{u}\,dx.
 \end{equation}
Adding up \eqref{enu} and \eqref{enb}, we formally have the following energy equality 
\begin{equation}\label{en1}
\frac12\dfrac{d}{dt}\bigl(\int_{\Omega}\rho|\textbf{u}|^2\,dx+\int_{\Omega}|\textbf{B}|^2\,dx\bigr)+\frac12\int_{\Omega}\mu(\rho)|\nabla\textbf{u}+\nabla\textbf{u}^T|^2\,dx+\int_{\Omega}\dfrac{|\curl\textbf{B}|^2}{\sigma(\rho)}\,dx=0.
\end{equation}
It is well-known and wide open in \cite{JL34, PL96}, when dealing with global weak solutions of non-linear partial differential equations, the global weak solutions we obtained usually satisfy the energy inequality instead of energy equality 
\begin{align}
&\frac12\int_{\Omega}(\rho|\textbf{u}|^2+|\textbf{B}|^2)\,dx+\frac12\int_0^T\int_{\Omega}\mu(\rho)|\nabla\textbf{u}+\nabla\textbf{u}^T|^2\,dxdt+\int_0^T\int_{\Omega}\dfrac{|\curl\textbf{B}|^2}{\sigma(\rho)}\,dxdt\notag\\
&\quad\leq\frac12\int_{\Omega} \bigl({\rho_0}{|\textbf{u}_0|^2}+|\textbf{B}_0|^2\bigr)\,dx\label{en2}.
\end{align}

 Let us notice from \eqref{positivemu} that
\begin{equation*}
\int_{\Omega}\mu(\rho)|\nabla\textbf{u}+\nabla\textbf{u}^T|^2\,dx\geq 2\underline{\mu}\int_{\Omega}|\nabla\textbf{u}|^2\,dx,
\end{equation*}
since $\div\textbf{u}=0.$
Then thanks to \eqref{positivesigma} we have 
\begin{align}
&\frac12\int_{\Omega}(\underline{\rho}|\textbf{u}|^2+|\textbf{B}|^2)\,dx+\underline{\mu}\int_0^T\int_{\Omega}|\nabla\textbf{u}|^2\,dxdt+\frac{1}{\bar\sigma}\int_0^T\int_{\Omega}|\curl\textbf{B}|^2\,dxdt\notag\\
&\quad\leq\frac12\int_{\Omega}\bigl( \rho_0{|\textbf{u}_0|^2}+|\textbf{B}_0|^2\bigr)\,dx\label{en3}.
\end{align}

 In order to formulate our problem and the main results, let us recall the definition of some functional spaces that we shall use throughout this paper. The space $\mathcal{D}(\Omega)$ is defined as the space of smooth functions compactly supported in the domain $\Omega,$ and $\mathcal{D}^{'}(\Omega)$ as the space of  distributions on $\Omega$. For $X$ a Banach space, $p\in[1, \infty]$ and $T>0$, the notation $L^p(0, T; X)$ or $L^p_T(X)$ designates the set of measurable functions $f: [0, T]\to X$ with $t\mapsto\|f(t)\|_X$ in $L^p(0, T)$, endowed with the norm $\|\cdot\|_{L^p_{T}(X)} :=\|\|\cdot\|_X\|_{L^p(0, T)},$ and agree that $\mathcal C([0, T]; X)$ denotes the set of continuous functions from $[0, T]$ to $X$. Slightly abusively, we will keep the same notations for multi-component functions.
 The space $H^1(\Omega)$ denotes the space of $L^2$ functions of $f$ on $\Omega$ such that $\nabla f$ also belongs to $L^2(\Omega).$ The Hilbertian norm is defined by 
 $$\|f\|_{H^1(\Omega)}^2 :=\|f\|_{L^2(\Omega)}^2+\|\nabla f\|_{L^2(\Omega)}^2.$$
 The space $H^1_0(\Omega)$ is defined as the closure of $\mathcal{D}(\Omega)$ for the $H^1(\Omega)$ norm, and the space $H^{-1}(\Omega)$ as the dual space of $H^1_0(\Omega)$ for the $\mathcal{D}^{'}\times\mathcal{D}$ duality. Then we introduce the vector-valued spaces
\begin{align*}
\mathcal{V}&=\{\textbf{u}\in\mathcal{D}(\Omega), ~\div\textbf{u}=0\},\\
\mathcal{H}&={\rm{the~closure~of}}~\mathcal{V}~{\rm{in}}~L^2(\Omega)\\
&=\{\mathbf{u}\in{L^2(\Omega)},~\div\mathbf{u}=0,~\textbf{u}\cdot\textbf{n}|_{\partial\Omega}=0\},\\
V_1&={\rm{the~closure~of}}~\mathcal{V}~{\rm{in}}~H^1(\Omega)\\
&=\{\mathbf{u}\in{H^1_0(\Omega)},~\div\mathbf{u}=0\},\\
\mathcal W&=\{\textbf{B}\in\mathcal{C}^\infty(\bar\Omega), ~\div\textbf{B}=0, ~\textbf{B}\cdot\textbf{n}|_{\partial\Omega}=0\},\\
W_1&={\rm{the~closure~of}}~\mathcal{W}~{\rm{in}}~H^1(\Omega)\\
&=\{\mathbf{B}\in{H^1(\Omega)},~\div\mathbf{B}=0,~\textbf{B}\cdot\textbf{n}|_{\partial\Omega}=0\},
\end{align*}
and their norms
\begin{align*}
&\|\textbf{u}\|_{V_1}=\|\nabla\textbf{u}\|_{L^2(\Omega)},\\
&\|\textbf{B}\|_{W_1}=\|\curl\textbf{B}\|_{L^2(\Omega)}.
\end{align*}
We equip $H$ with the following inner product
$$(\mathbf{u},\mathbf{v})=\int_{\Omega}\mathbf{u}\cdot\mathbf{v}\,dx,~{\rm{for~all}}~\mathbf{u}, \mathbf{v}\in H.$$
Just remark that, one can establish that $\|\cdot\|_{V_1}$ and $\|\cdot\|_{W_1}$ defined two norms which are equivalent to that introduced by  $H^1(\Omega)$ on $V_1$ and $W_1,$ respectively (cf. Duvaut and  Lions \cite{DL72},  Sermange and  Temam \cite{ST83}).

In accordance with \eqref{1.3},  we assume that $\div {\bf u}_0=0$ 
and, for physical consistency,  since a magnetic field has to be 
divergence free,  we suppose  that $\div {\bf B}_0=0,$ too,
a property that is conserved  through the evolution. 
With above notations and a priori estimate \eqref{en3}, we propose the following assumptions on the initial data:
\begin{align}
&\rho_0\in L^\infty, \quad\rho_0\geq\underline{\rho}>0\label{inassum-rho},\\
&\textbf{u}_{0}\in \mathcal{H},\label{inassum-u}\\
&\textbf{B}_{0}\in \mathcal{H}\label{inassum-b}.
\end{align}

In a same fashion with \cite{PL96, GB97}, we define our weak solutions as follows.
\begin{definition}\label{defweaksolu}
We say that $(\rho, \mathbf{u}, \mathbf{B})$ is a global weak solution of the problem \eqref{1.1}-\eqref{inma} with the initial assumptions \eqref{inassum-rho}-\eqref{inassum-b}, if for any $T>0,$ $(\rho, \mathbf{u}, \mathbf{B})$ satisfies the following properties:
\begin{align*}
&\rho\geq\bar\rho,\quad\rho\in L^\infty((0, T)\times \Omega), \quad\rho\in \mathcal{C}([0, T]; L^p(\Omega)),\quad1\leq p<\infty,\\
&\mathbf{u}\in L^2(0, T; V_1),\quad{\rm{and}}\quad\rho|\mathbf{u}|^2\in L^\infty(0, T; L^1({\Omega})),\\
&\mathbf{B}\in L^2(0, T; W_1)\cap L^\infty(0, T; L^2(\Omega)),
\end{align*}
moreover, for any  $\phi\in\mathcal{C}^1([0, T]\times\Omega)$ with $\phi(T, \cdot)=0,$
\begin{equation}\label{wfdensity}
-\int_0^T\int_{\Omega}(\rho\partial_t \phi+\rho\mathbf{u}\cdot\nabla\phi)\,dxdt=\int_{\Omega}\rho_0\phi(0, x)\,dx;
\end{equation}
for any $\Phi\in\mathcal{C}^1([0, T]\times\Omega)$ with $\div\Phi=0$ and $\Phi(T, \cdot)=0,$
\begin{align}
&\int_0^T\int_{\Omega}(-\rho\mathbf{u}\cdot\partial_t\Phi-(\rho\mathbf{u}\otimes\mathbf{u})\cdot\nabla\Phi+2\mu(\rho) d(\mathbf{u})\cdot d(\Phi))\,dxdt\notag\\
&\quad=\int_0^T\int_{\Omega}(\curl\mathbf{B}\times\mathbf{B})\cdot\Phi\,dxdt+\int_{\Omega}\mathbf{m}_0\cdot\Phi(0, x)dx\label{wfu};
\end{align}
for any $\Psi\in\mathcal{C}^1([0, T]\times\Omega)$ with $\Psi(T, \cdot)=0,$
\begin{align}
&\int_0^T\int_{\Omega}\bigl(-\mathbf{B}\cdot\partial_t\Psi+(\mathbf{B}\times\mathbf{u}+h\dfrac{\curl\mathbf{B}\times\mathbf{B}}{\rho}+\dfrac{\curl\mathbf{B}}{\sigma(\rho)})\cdot\curl\Psi\bigr)\,dxdt\notag\\
&\quad=\int_{\Omega}\mathbf{B}_0\cdot\Psi(0, x)dx\label{wfb};
\end{align}
and finally, the energy inequality \eqref{en2} holds for all $t\in[0, T].$
\end{definition}

\subsection{Main results}
In comparison with the models studied in \cite{PL96, GB97}, the main difficulty of proving the existence of weak solutions lies in the Lorentz force term $\curl\textbf{B}\times \textbf{B}$ in the Navier-Stokes equations \eqref{1.2} while $\textbf{B}$ satisfies a quasi-linear parabolic equations with a tri-linear term involving density, we have to recover some compactness on $\textbf{B}$  in order to pass to the limit in the non-linear terms.

We now state our main theorem.

\begin{theorem}\label{Th_1}
Under the regularity assumptions \eqref{inassum-rho}-\eqref{inassum-b} on the initial data, the initial-boundary value problem to the density-dependent Hall-MHD system \eqref{1.1}-\eqref{inma} has a weak solution in the sense of  Definition \ref{defweaksolu}. Furthermore, we have for all $0\leq\alpha\leq\beta\leq\infty$
\begin{align}
{\rm{meas}}\{x\in\R^3,~ \alpha\leq\rho(t,x)\leq\beta\}\quad{\rm{is~independent~of}}~t\geq0, \label{meas}
\end{align}
and\footnote{The space $\mathcal{C}_w([0, T]; L^2(\Omega))$  denotes continuity on the interval $[0, T]$ with values in the weak topology of $L^2(\Omega)$}
$$\mathbf{u},~\mathbf{B}\in\mathcal{C}_w([0, T]; L^2(\Omega)).$$
\end{theorem}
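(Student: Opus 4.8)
The plan is to construct weak solutions by a standard approximation–compactness scheme, with the essential new ingredient being the pointwise bounds on $\rho$ that render the Hall term harmless. First I would set up an approximate system: replace the troublesome nonlinearities by regularized/truncated versions (for instance, a Galerkin scheme in $\mathbf u$ and $\mathbf B$ against the bases of $V_1$ and $W_1$, combined with a mollified transport equation for $\rho$, or alternatively a Friedrichs-type regularization), and solve the resulting ODE/parabolic system for each fixed approximation parameter $n$. Because the initial density satisfies $\rho_0 \ge \underline\rho > 0$, the transport equation \eqref{1.1} together with $\div\mathbf u = 0$ propagates both bounds, so at every level of approximation one has $\underline\rho \le \rho_n \le \|\rho_0\|_{L^\infty}$ uniformly; this is exactly what keeps the factor $1/\rho_n$ in the Hall term bounded and avoids the vacuum degeneracy. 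The energy computation reproduced in \eqref{enu}--\eqref{en3} — which crucially uses the cancellations $(\curl\mathbf B\times\mathbf B)\cdot\curl\mathbf B = 0$ and $(\mathbf B\times\mathbf u)\cdot\curl\mathbf B = (\curl\mathbf B\times\mathbf B)\cdot\mathbf u$, together with the boundary conditions \eqref{bcu}, \eqref{bcb} — is performed at the approximate level, giving uniform bounds $\mathbf u_n \in L^\infty_T(L^2)\cap L^2_T(V_1)$, $\mathbf B_n \in L^\infty_T(L^2)\cap L^2_T(W_1)$, and $\rho_n$ bounded in $L^\infty$.

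Next I would extract weakly-$*$ convergent subsequences from these uniform bounds and establish strong compactness where needed to pass to the limit in the nonlinear terms. For $\mathbf u_n$, the classical Aubin–Lions argument applies: using \eqref{1.2} one bounds $\partial_t(\rho_n\mathbf u_n)$ in $L^{4/3}_T(V_1')$ or a similar negative space, obtaining strong convergence of $\rho_n\mathbf u_n$ (hence of $\mathbf u_n$ after dividing by the uniformly positive $\rho_n$) in $L^2_T(L^2)$, which handles the convective term $\rho\mathbf u\otimes\mathbf u$ and, since $\mu$ is continuous and $\rho_n$ converges a.e. (from the transport equation), the viscous term too. For the density, continuity of the flow and the uniform $L^\infty$ bound give $\rho_n\to\rho$ in $\mathcal C([0,T];L^p)$ for all finite $p$ by the standard DiPerna–Lions renormalization theory for transport with $W^{1,2}$ divergence-free drift. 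The delicate point — and what I expect to be the main obstacle — is the compactness of $\mathbf B_n$, because \eqref{1.4} is only quasilinear parabolic (the diffusion coefficient $1/\sigma(\rho)$ and the coefficient $1/\rho$ both depend on the density, and the Hall term $\curl(h\,\curl\mathbf B\times\mathbf B/\rho)$ is genuinely trilinear and of higher order in $\mathbf B$). From the magnetic equation one gets $\partial_t\mathbf B_n$ bounded only in a space like $L^{4/3}_T(W^{-1,3/2})$ or $L^2_T(H^{-s})$ with some loss, because the Hall term, being $\curl$ of a product of $\curl\mathbf B \in L^2_{t,x}$ and $\mathbf B \in L^\infty_T L^2 \cap L^2_T L^6$, lives in a low-regularity negative Sobolev space; combined with $\mathbf B_n \in L^2_T(H^1)$ this still yields, via Aubin–Lions–Simon, strong convergence of $\mathbf B_n$ in, say, $L^2_T(L^2)$ (indeed in $L^2_T(L^q)$ for $q<6$ by interpolation). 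That strong convergence of $\mathbf B_n$, together with the a.e. convergence of $\rho_n$ and the uniform lower bound $\rho_n \ge \underline\rho$, is precisely what lets one pass to the limit in $\curl\mathbf B\times\mathbf B/\rho$ and in $\curl\mathbf B\times\mathbf B$ (one factor strong in $L^2_T L^q$, the other weak in $L^2_T L^2$ with the curl), as well as in $\mathbf B\times\mathbf u$ and $\curl\mathbf B/\sigma(\rho)$. I would emphasize in the write-up that it is the preservation of the two-sided density bounds that makes this argument work — this is the remark alluded to as Remark~\ref{Re-low-bound-rho}.

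Finally I would verify the remaining assertions of the theorem. The weak formulations \eqref{wfdensity}, \eqref{wfu}, \eqref{wfb} are obtained by testing the approximate equations against the corresponding test functions and passing to the limit using the convergences above; the energy inequality \eqref{en2} follows from the approximate energy equality by weak lower semicontinuity of the norms (the limit being an inequality, not an equality, exactly as in \cite{JL34, PL96}). The measure-preservation property \eqref{meas} is a direct consequence of the transport structure: for any smooth $b$, $\int_\Omega b(\rho(t,x))\,dx$ is constant in $t$ because $\rho$ renormalizes and $\div\mathbf u = 0$, and approximating indicators of intervals $[\alpha,\beta]$ by such $b$ gives the claim. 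The weak-in-time continuity $\mathbf u,\mathbf B\in\mathcal C_w([0,T];L^2)$ follows from the bounds $\mathbf u,\mathbf B\in L^\infty_T(L^2)$ together with the (weak) continuity of $t\mapsto(\rho\mathbf u)(t)$ and $t\mapsto\mathbf B(t)$ in a negative-norm space obtained from the equations, via the standard lemma that $L^\infty_T(X) \cap \mathcal C([0,T]; X_{\mathrm{weak-}}\text{-like space})$ embeds into $\mathcal C_w([0,T];X)$ when $X$ is reflexive; for $\mathbf u$ one first gets $\rho\mathbf u\in\mathcal C_w([0,T];L^2)$ and then divides by $\rho$, using its strong $\mathcal C([0,T];L^p)$ continuity and uniform positivity.
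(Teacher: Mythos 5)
Your proposal follows essentially the same route as the paper: a Galerkin approximation in the bases of $V_1$ and $W_1$ coupled with a transport equation for mollified initial density (the paper resolves the coupling at fixed $n$ via Schauder's fixed point and uses a second parameter $\epsilon$ for mollifying the data and the coefficients $\mu,\sigma$, passing to the limit in two stages), uniform energy bounds, Lions' compactness theorem for $(\rho_n,\rho_n\mathbf u_n)$, an Aubin--Lions bound on $\partial_t\mathbf B_n$ in a negative Sobolev space yielding strong $L^2_T(L^2)$ convergence of $\mathbf B_n$, and the preserved two-sided density bounds to pass to the limit in the Hall term. The renormalization argument for \eqref{meas} and the weak-in-time continuity argument likewise match the paper's proof.
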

The next theorem states a weak-strong uniqueness property of the solution. We show that any global weak solution coincides with a more regular solution as long as the latter exists.
\begin{theorem}\label{Th_2}
Assume $\mu, \sigma$ are locally Lipschitz continuous. Let $(\rho, \mathbf{u}, \mathbf{B})$ be a weak solution obtained from Theorem \ref{Th_1}. If there exists a  solution $(\hat\rho, \mathbf{\hat u}, \mathbf{\hat B})\in\mathcal{C}([0, T]\times\Omega)$ of the problem \eqref{1.1}-\eqref{bcb}, which satisfies
\begin{align}
\nabla\hat\rho, ~\partial_t\mathbf{\hat u}, ~\partial_t\mathbf{\hat B}\in L^2(0, T; L^3(\Omega))\quad{\rm{and}}\quad\nabla\mathbf{\hat u}, \nabla\mathbf{\hat B}\in L^2(0, T; L^\infty(\Omega)),\label{regularity-ws}
\end{align}
and at initial time
\begin{align*}
\hat\rho|_{t=0}=\rho_0,\quad\mathbf{\hat u}|_{t=0}=\mathbf{u}_0,\quad\mathbf{\hat B}|_{t=0}=\mathbf{B}_0 \quad{\rm{in}}~\Omega,
\end{align*}
then we have $(\rho, \mathbf{u}, \mathbf{B})\equiv(\hat\rho, \mathbf{\hat u}, \mathbf{\hat B})$ a.e. in $(0, T)\times\Omega.$
\end{theorem}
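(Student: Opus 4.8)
\emph{Proof strategy.} The plan is a relative-energy (weak--strong) argument. Write $\wt\rho=\rho-\hat\rho$, $\wt{\mathbf u}=\mathbf u-\hat{\mathbf u}$, $\wt{\mathbf B}=\mathbf B-\hat{\mathbf B}$ and set
\begin{equation*}
\mathcal E(t)=\frac12\int_\Omega\rho|\wt{\mathbf u}|^2\,dx+\frac12\int_\Omega|\wt{\mathbf B}|^2\,dx,\qquad \mathcal D(t)=\int_\Omega|\wt\rho|^2\,dx.
\end{equation*}
The hypotheses \eqref{regularity-ws}, together with $\hat\rho,\hat{\mathbf u},\hat{\mathbf B}\in\mathcal C([0,T]\times\Omega)$ and $\underline\rho\le\rho,\hat\rho\le\|\rho_0\|_{L^\infty}$, are precisely what makes $\hat{\mathbf u}$ and $\hat{\mathbf B}$ legitimate test fields in \eqref{wfu} and \eqref{wfb} (after a routine time-truncation and mollification, using $\mathbf u,\mathbf B\in\mathcal C_w([0,T];L^2(\Omega))$ from Theorem~\ref{Th_1}), makes all the products below integrable on $(0,T)\times\Omega$, and makes the strong solution satisfy the energy \emph{equality} associated with \eqref{en1}. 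The target is a differential inequality
\begin{equation*}
\mathcal E(t)+\mathcal D(t)\le\int_0^t g(s)\bigl(\mathcal E(s)+\mathcal D(s)\bigr)\,ds,\qquad g\in L^1(0,T),
\end{equation*}
which, since the matching initial data give $\mathcal E(0)=\mathcal D(0)=0$, forces $\mathcal E\equiv\mathcal D\equiv0$ by Grönwall; then $\wt\rho\equiv0$, and since $\rho\ge\underline\rho>0$ also $\wt{\mathbf u}\equiv\wt{\mathbf B}\equiv0$.

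For the density, subtracting the two copies of \eqref{1.1} gives $\partial_t\wt\rho+\mathbf u\cdot\nabla\wt\rho=-\wt{\mathbf u}\cdot\nabla\hat\rho$ with $\div\mathbf u=0$; an $L^2$ estimate (legitimate by DiPerna--Lions renormalisation, as $\mathbf u\in L^2(0,T;V_1)$) together with $H^1_0(\Omega)\hookrightarrow L^6(\Omega)$ and Hölder yields $\frac{d}{dt}\mathcal D\le\eps\|\nabla\wt{\mathbf u}\|_{L^2}^2+C_\eps\|\nabla\hat\rho\|_{L^3}^2\mathcal D$, with $\|\nabla\hat\rho\|_{L^3}^2\in L^1(0,T)$ by \eqref{regularity-ws}. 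For $\mathcal E$, expand
\begin{equation*}
\mathcal E=\frac12\int_\Omega\rho|\mathbf u|^2+\frac12\int_\Omega|\mathbf B|^2-\int_\Omega\rho\mathbf u\cdot\hat{\mathbf u}-\int_\Omega\mathbf B\cdot\hat{\mathbf B}+\frac12\int_\Omega\rho|\hat{\mathbf u}|^2+\frac12\int_\Omega|\hat{\mathbf B}|^2,
\end{equation*}
bound the first two terms by the energy inequality \eqref{en2}, evaluate the cross terms by testing \eqref{wfu} against $\hat{\mathbf u}$ and \eqref{wfb} against $\hat{\mathbf B}$ and the classical equations for the strong solution against $\mathbf u$ and $\mathbf B$ (the boundary contributions cancelling by \eqref{bcu} and by the tangential condition in \eqref{bcb}, which is built into the weak formulation \eqref{wfb}), handle $\frac12\int\rho|\hat{\mathbf u}|^2$ via \eqref{1.1} and the $\hat{\mathbf u}$-equation, and $\frac12\int|\hat{\mathbf B}|^2$ via the strong energy equality. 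Using $\div\wt{\mathbf u}=0$ and \eqref{positivemu}--\eqref{positivesigma}, the dissipative terms reorganise to a quantity bounded below by $2\underline\mu\int_0^t\|\nabla\wt{\mathbf u}\|_{L^2}^2+\bar\sigma^{-1}\int_0^t\|\curl\wt{\mathbf B}\|_{L^2}^2$, and one arrives at
\begin{equation*}
\mathcal E(t)+2\underline\mu\int_0^t\|\nabla\wt{\mathbf u}\|_{L^2}^2+\frac1{\bar\sigma}\int_0^t\|\curl\wt{\mathbf B}\|_{L^2}^2\le\int_0^t\mathcal R(s)\,ds,
\end{equation*}
where $\mathcal R$ collects: the convective remainder $-\int_\Omega\rho(\wt{\mathbf u}\cdot\nabla)\hat{\mathbf u}\cdot\wt{\mathbf u}$, bounded by $2\|\nabla\hat{\mathbf u}\|_{L^\infty}\mathcal E$; the coefficient-discrepancy terms coming from $\mu(\rho)-\mu(\hat\rho)$ and $\sigma(\rho)^{-1}-\sigma(\hat\rho)^{-1}$, which by the local Lipschitz continuity of $\mu,\sigma$ (available since $\rho,\hat\rho$ stay in the fixed compact interval $[\underline\rho,\|\rho_0\|_{L^\infty}]$) and $\underline\sigma>0$ are $\lesssim\|\wt\rho\|_{L^2}\bigl(\|\nabla\hat{\mathbf u}\|_{L^\infty}+\|\curl\hat{\mathbf B}\|_{L^\infty}\bigr)\bigl(\|\nabla\wt{\mathbf u}\|_{L^2}+\|\curl\wt{\mathbf B}\|_{L^2}\bigr)$; the classical MHD Lorentz cross terms $\curl\mathbf B\times\mathbf B$ vs $\curl\hat{\mathbf B}\times\hat{\mathbf B}$ and $\mathbf B\times\mathbf u$ vs $\hat{\mathbf B}\times\hat{\mathbf u}$, which recombine (using the triple-product identity $(\mathbf B\times\mathbf u)\cdot\curl\mathbf B=(\curl\mathbf B\times\mathbf B)\cdot\mathbf u$) into terms controlled by $\mathcal E$ plus a part absorbed by the dissipation, as in \cite{MR3227510}; and the Hall cross terms, treated next. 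Young's inequality absorbs part of $\mathcal R$ into the dissipation and bounds the rest by $\bigl(1+\|\nabla\hat{\mathbf u}\|_{L^\infty}+\|\nabla\hat{\mathbf u}\|_{L^\infty}^2+\|\curl\hat{\mathbf B}\|_{L^\infty}^2\bigr)(\mathcal E+\mathcal D)$, whose coefficient lies in $L^1(0,T)$ by \eqref{regularity-ws}.

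The one genuinely new ingredient is the tri-linear Hall term. Its total contribution to $\frac{d}{dt}\mathcal E$ is $\mathcal H=h\int_\Omega\tfrac{\curl\mathbf B\times\mathbf B}{\rho}\cdot\curl\hat{\mathbf B}+h\int_\Omega\tfrac{\curl\hat{\mathbf B}\times\hat{\mathbf B}}{\hat\rho}\cdot\curl\mathbf B$ (only the two cross-pairings survive, the self-pairings vanishing by $\mathbf a\times\mathbf b\perp\mathbf a$ as in \eqref{enb}), and one checks $\mathcal H=-h\int_\Omega\bigl(\tfrac{\curl\mathbf B\times\mathbf B}{\rho}-\tfrac{\curl\hat{\mathbf B}\times\hat{\mathbf B}}{\hat\rho}\bigr)\cdot\curl\wt{\mathbf B}$. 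Decomposing, pointwise a.e.,
\begin{equation*}
\frac{\curl\mathbf B\times\mathbf B}{\rho}-\frac{\curl\hat{\mathbf B}\times\hat{\mathbf B}}{\hat\rho}=\frac{\curl\wt{\mathbf B}\times\mathbf B}{\rho}+\frac{\curl\hat{\mathbf B}\times\wt{\mathbf B}}{\rho}-\frac{\wt\rho}{\rho\hat\rho}\,\curl\hat{\mathbf B}\times\hat{\mathbf B},
\end{equation*}
the first term again pairs to zero, $\rho^{-1}(\curl\wt{\mathbf B}\times\mathbf B)\cdot\curl\wt{\mathbf B}=0$ --- this is the cancellation that removes the piece quadratic in $\curl\wt{\mathbf B}$, and it is exactly why $\rho\ge\underline\rho$ is needed so that the division makes sense. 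The second term is $\le\underline\rho^{-1}\|\curl\hat{\mathbf B}\|_{L^\infty}\|\wt{\mathbf B}\|_{L^2}\|\curl\wt{\mathbf B}\|_{L^2}$ and the third is $\le\underline\rho^{-2}\|\curl\hat{\mathbf B}\|_{L^\infty}\|\hat{\mathbf B}\|_{L^\infty}\|\wt\rho\|_{L^2}\|\curl\wt{\mathbf B}\|_{L^2}$; since $\|\curl\hat{\mathbf B}\|_{L^\infty}^2\in L^1(0,T)$ by \eqref{regularity-ws} and $\|\hat{\mathbf B}\|_{L^\infty((0,T)\times\Omega)}<\infty$ by continuity, Young's inequality absorbs the $\|\curl\wt{\mathbf B}\|_{L^2}^2$ factors into $\bar\sigma^{-1}\int_0^t\|\curl\wt{\mathbf B}\|_{L^2}^2$ and leaves an $L^1(0,T)$-weighted multiple of $\mathcal E+\mathcal D$. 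Combining this with the density estimate and the previous display gives the announced Grönwall inequality, hence $\mathcal E\equiv\mathcal D\equiv0$ and the conclusion. The main obstacle is exactly this Hall step --- together with the preliminary task of giving rigorous meaning to $\curl\mathbf B\times\mathbf B/\rho$ and to the relative-energy identity when only $\mathbf B\in L^\infty(0,T;L^2(\Omega))\cap L^2(0,T;W_1)$ and $\rho\ge\underline\rho$ are at hand: one must isolate the orthogonality cancellation of the leading part of $\mathcal H$ and dominate the remainder, in particular the term carrying the density defect $\wt\rho$, by the magnetic dissipation with a time coefficient in $L^1(0,T)$, which is where the assumptions $\rho\ge\underline\rho>0$, $\nabla\hat{\mathbf B}\in L^2(0,T;L^\infty(\Omega))$ and the Lipschitz regularity of $\mu,\sigma$ are all used. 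A secondary technical point is justifying, by mollification and transport-equation renormalisation, that $t\mapsto\mathcal E(t)$ and $t\mapsto\int_\Omega\rho\mathbf u\cdot\hat{\mathbf u}\,dx$ are absolutely continuous with the derivatives computed above.
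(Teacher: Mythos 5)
Your proposal is correct and follows essentially the same relative-energy/weak--strong argument as the paper: test the weak formulations with $(\hat{\mathbf u},\hat{\mathbf B})$, combine with the two energy balances, cancel the quadratic Hall contribution by orthogonality of $\curl\wt{\mathbf B}\times\mathbf B$ against $\curl\wt{\mathbf B}$, close the density defect in $L^2$ via $\|\wt{\mathbf u}\|_{L^6}\|\nabla\hat\rho\|_{L^3}\|\wt\rho\|_{L^2}$, and conclude by Gr\"onwall. The only bookkeeping omission is the inertia-discrepancy term $(\rho-\hat\rho)(\partial_s\hat{\mathbf u}+\hat{\mathbf u}\cdot\nabla\hat{\mathbf u})\cdot\wt{\mathbf u}$, which is precisely where the hypothesis $\partial_t\hat{\mathbf u}\in L^2(0,T;L^3(\Omega))$ enters and which is absorbed into your $\mathcal E+\mathcal D$ Gr\"onwall scheme in the same way as the terms you do list.
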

In the rest of this paper, we first set up our approximation scheme and establish the existence of solutions to the approximation problem in Section \ref{sec.2}. Then in Section \ref{sec.3}, in order to recover the original system, we deduce some compactness results and finally finish the proof of our main theorem. In the end, we prove Theorem \ref{Th_2}.

Throughout this paper, we use $C$ to denote a general positive constant which may
different from line to line.

\section{Approximation Scheme}\label{sec.2}
The essential idea to prove the existence of a weak solution to the density-dependent Hall-MHD system is to introduce an approximation problem, that allows one to define \eqref{1.1} as a classical transport equation. Now, presenting the approximation problem and showing the existence of a regular solution to this problem is our purpose for the next two steps.
\subsection{First step : a linearized  problem}
At this step, we prove a preliminary result for a linearized  problem with prescribed density, which will be useful in subsection \ref{sec2.2}.
For this purpose, we first define two finite dimensional spaces for $n\in\mathbb{N}^* :$ 
$$\mathcal{V}_{n}={\rm{span}}\{\Theta_i\}_{i=1}^n,\quad{\rm{and}}\quad\mathcal{W}_{n}={\rm{span}}\{\Gamma_i\}_{i=1}^n,$$
where $\{\Theta_i\}_{i=1}^\infty\subset\mathcal{V}$ and $\{\Gamma_i\}_{i=1}^\infty\subset\mathcal{W}$ are  orthonormal basis of $V_1$ and $W_1$ respectively. 

For $\rho$, $\mathbf{v}$, $\mathbf{H}$ and $n,$ $T>0$ arbitrarily fixed such that 
\begin{align}
&\rho,~\partial_t\rho\in\mathcal{C}([0, T], \mathcal{C}^1(\bar\Omega)),~{\rm{and~0<\rho_1\leq\rho(t, \mathnormal{x})\leq\rho_2,}}\label{fixedrho}\\
&\mathbf{v}\in \mathcal{C}([0, T]; \mathcal{V}_n)~{\rm{such~that}}~\partial_t\rho+\div(\rho\mathbf{v})=0\label{lv},\\
&\mathbf{H}\in\mathcal{C}([0, T]; \mathcal{W}_n)\label{lb},
\end{align}
the linearized  problem is to find a couple of  vector-valued functions $(\mathbf{u}, \mathbf{B})$ such that 
\begin{equation}\label{lwfu}
\int_{\Omega}\rho(\partial_t\mathbf{u}+\mathbf{v}\cdot\nabla\mathbf{u})\cdot\Phi\,dx+\int_{\Omega}2\mu(\rho) d(\mathbf{u})\cdot d(\Phi)\,dx=\int_{\Omega}(\curl\mathbf{B}\times\mathbf{H})\cdot\Phi\,dx,
\end{equation}for any $\Phi\in\mathcal{V}_n,$ 
\begin{equation}\label{lwfb}
\int_{\Omega}\partial_t\mathbf{B}\cdot\Psi\,dx+\int_{\Omega}\Bigl(\mathbf{H}\times\mathbf{u}+h\dfrac{\curl\mathbf{B}\times\mathbf{H}}{\rho}+\dfrac{\curl\mathbf{B}}{\sigma(\rho)}\Bigr)\cdot\curl\Psi\,dx=0,
\end{equation}for any $\Psi\in\mathcal{W}_n,$
and with initial condition $(\mathbf{u}|_{t=0}, \mathbf{B}|_{t=0})=(\mathbf{u}_0, \mathbf{B}_0).$

To show a existence result for the above linearized problem is not difficult, it will  based on the Galerkin's method. We have
\begin{proposition}\label{P.l}
Let $(\mathbf{u}_0, \mathbf{B}_0)\in\mathcal{V}_n\times\mathcal{W}_n.$ Under the assumptions \eqref{fixedrho}-\eqref{lb}, there exists a unique pair of solution $(\mathbf{u}, \mathbf{B})\in\mathcal{C}^1([0, T]; \mathcal{V}_n)\times\mathcal{C}^1([0, T]; \mathcal{W}_n)$ to the problem \eqref{lwfu}-\eqref{lwfb}. Moreover, we have energy equality \eqref{en1}.
\end{proposition}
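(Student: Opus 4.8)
\textbf{Proof proposal for Proposition \ref{P.l}.}

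The plan is to implement the Galerkin method in the finite-dimensional spaces $\mathcal{V}_n$ and $\mathcal{W}_n$, which here is unusually clean because the ``approximation'' is already baked into the statement: we seek $(\mathbf{u},\mathbf{B})$ living in $\mathcal{V}_n\times\mathcal{W}_n$, so no further spectral truncation is needed and the problem \eqref{lwfu}--\eqref{lwfb} is genuinely an ODE system. First I would expand $\mathbf{u}(t)=\sum_{i=1}^n g_i(t)\Theta_i$ and $\mathbf{B}(t)=\sum_{i=1}^n f_i(t)\Gamma_i$, and insert these into \eqref{lwfu}--\eqref{lwfb} with test functions $\Phi=\Theta_j$, $\Psi=\Gamma_j$. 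This yields a linear first-order ODE system for the vector $(g,f)\in\R^{2n}$ of the form $M(t)\,\partial_t(g,f)^T = A(t)(g,f)^T$, where the ``mass matrix'' $M(t)$ has block-diagonal structure: the $\mathbf{u}$-block has entries $\int_\Omega \rho(t)\,\Theta_i\cdot\Theta_j\,dx$ and the $\mathbf{B}$-block is the identity (since $\{\Gamma_i\}$ is orthonormal in $W_1$, hence $\int_\Omega\partial_t\mathbf{B}\cdot\Gamma_j\,dx = \partial_t f_j$ only if the $\Gamma_i$ are also $L^2$-orthonormal — more carefully, $\int_\Omega\Gamma_i\cdot\Gamma_j\,dx$ is a fixed positive-definite Gram matrix). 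The coefficients of $M$ and $A$ are continuous in $t$ because $\rho,\partial_t\rho\in\mathcal{C}([0,T];\mathcal{C}^1(\bar\Omega))$, $\mathbf v\in\mathcal{C}([0,T];\mathcal V_n)$, $\mathbf H\in\mathcal{C}([0,T];\mathcal W_n)$, and $\mu(\rho),1/\sigma(\rho),1/\rho$ are continuous functions of $\rho$ staying in the compact range $[\rho_1,\rho_2]$ by \eqref{fixedrho}, \eqref{positivemu}, \eqref{positivesigma}.

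Next I would check that $M(t)$ is invertible with $M(t)^{-1}$ continuous in $t$: the $\mathbf{u}$-block is the Gram matrix of $\{\Theta_i\}$ weighted by $\rho(t)\geq\rho_1>0$, hence symmetric positive definite uniformly in $t$ (its smallest eigenvalue is bounded below by $\rho_1$ times the smallest eigenvalue of the unweighted Gram matrix), and the $\mathbf{B}$-block is a fixed positive-definite matrix. Therefore the ODE system can be rewritten as $\partial_t(g,f)^T = M(t)^{-1}A(t)\,(g,f)^T =: \widetilde A(t)(g,f)^T$ with $\widetilde A\in\mathcal{C}([0,T];\R^{2n\times2n})$. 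By the Cauchy--Lipschitz (Picard--Lindelöf) theorem for linear ODEs with continuous coefficients, there is a unique global solution $(g,f)\in\mathcal{C}^1([0,T];\R^{2n})$ with the prescribed initial data $(\mathbf u_0,\mathbf B_0)\in\mathcal V_n\times\mathcal W_n$; linearity guarantees the solution does not blow up and exists on all of $[0,T]$. This gives $(\mathbf u,\mathbf B)\in\mathcal{C}^1([0,T];\mathcal V_n)\times\mathcal{C}^1([0,T];\mathcal W_n)$, and uniqueness is inherited from ODE uniqueness.

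Finally, for the energy equality \eqref{en1}, I would take $\Phi=\mathbf{u}(t)\in\mathcal V_n$ in \eqref{lwfu} and $\Psi=\mathbf{B}(t)\in\mathcal W_n$ in \eqref{lwfb} — legitimate now that we know $\mathbf u,\mathbf B$ are $\mathcal C^1$ in time with values in the Galerkin spaces — and repeat the formal computation from the introduction. The key algebraic identities are exactly those used for \eqref{enu}--\eqref{enb}: $(\curl\mathbf B\times\mathbf H)\cdot\mathbf H=0$ kills the Hall term against $\curl\mathbf B$, $(\mathbf H\times\mathbf u)\cdot\curl\mathbf B = (\curl\mathbf B\times\mathbf H)\cdot\mathbf u$ makes the Lorentz-type couplings in the two equations cancel when added, and $\frac12\frac{d}{dt}\int_\Omega\rho|\mathbf u|^2 = \int_\Omega\rho(\partial_t\mathbf u+\mathbf v\cdot\nabla\mathbf u)\cdot\mathbf u\,dx$ using \eqref{lv} together with the transport identity \eqref{v1} and the fact that $\mathbf v\in\mathcal V_n\subset\mathcal V$ has zero trace on $\partial\Omega$. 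Here the boundary terms vanish directly because $\mathbf v$ and $\mathbf u$ are compactly supported and because $\mathbf B\in\mathcal W_n$ satisfies $\mathbf B\cdot\mathbf n|_{\partial\Omega}=0$; note we do not need the nonlinear boundary condition \eqref{bcb} since at the linearized level the troublesome term $\int_{\partial\Omega}(\mathbf n\times(h\,\curl\mathbf B\times\mathbf H/\rho))\cdot\mathbf B\,dS$ is what it is, but in the present linearized weak formulation \eqref{lwfb} the $\curl\Psi$ structure means no boundary term is ever generated. The main obstacle is not any single step but bookkeeping: ensuring the weighted Gram matrix inversion is uniform in $t$ (so the ODE coefficients are genuinely continuous and the linear solution is global), and being careful that the $\{\Gamma_i\}$ being orthonormal in $W_1=\{\|\cdot\|_{W_1}=\|\curl\cdot\|_{L^2}\}$ rather than in $L^2$ only affects the $\mathbf B$-mass matrix by a fixed invertible constant, not its solvability.
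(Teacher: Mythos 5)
Your proposal is correct and follows essentially the same route as the paper: Galerkin expansion in the fixed bases $\{\Theta_i\}$, $\{\Gamma_i\}$, reduction to a linear ODE system whose mass matrix is invertible because $\rho\geq\rho_1>0$ makes the weighted Gram matrix $\bigl(\int_\Omega\rho\,\Theta_i\cdot\Theta_j\,dx\bigr)$ positive definite, Cauchy--Lipschitz for global existence and uniqueness of the coefficient functions, and testing with $(\mathbf u,\mathbf B)$ to obtain the energy identity. One small slip worth fixing: the identity that annihilates the Hall term is $(\curl\mathbf B\times\mathbf H)\cdot\curl\mathbf B=0$, not $(\curl\mathbf B\times\mathbf H)\cdot\mathbf H=0$ (both are true instances of the vanishing triple product, but the former is the one actually paired with $\curl\Psi=\curl\mathbf B$ in \eqref{lwfb}); otherwise your bookkeeping, including the observation that the $L^2$-Gram matrix of the $\{\Gamma_i\}$ is a fixed positive-definite matrix rather than the identity, is if anything more careful than the paper's.
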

\begin{proof}
We look for a solution $(\mathbf{u}, \mathbf{B})$  under the form
\begin{equation}\label{lub}
\left\{
\begin{aligned}
&\mathbf{u}=\sum_{i=1}^n\alpha_{i}(t)\Theta_i,\\
&\mathbf{B}=\sum_{i=1}^n\beta_{i}(t)\Gamma_i.
\end{aligned}
\right.
\end{equation}
Look at weak form \eqref{lwfu} and \eqref{lwfb}, replace $\Phi$ by $\Theta_j$ and $\Psi$ by $\Gamma_j$ for $j=1,\dots,n,$ respectively, we find that scalar functions $\alpha_i$ and $\beta_i$ $(i=1,\dots,n)$ are solutions of the following linear ODEs :
\begin{equation}\label{ODEs}
\left\{
\begin{aligned}
& \Bigl(\int_{\Omega}\rho\Theta_i\cdot\Theta_j\,dx\Bigr)\dfrac{d\alpha_i}{dt}+\Bigl(\int_{\Omega}\bigl(\rho(\mathbf{v}\cdot\nabla\Theta_i)\cdot\Theta_j+2\mu(\rho) d(\Theta_i)\cdot d(\Theta_j\bigr)\,dx\Bigr)\alpha_i\\
&\qquad-\Bigl(\int_{\Omega}(\curl\Gamma_i\times \mathbf{H})\cdot\Theta_j\,dx\Bigr)\beta_i=0,\\
&\Bigl(\int_{\Omega}\Gamma_i\cdot\Gamma_j\,dx\Bigr)\dfrac{d\beta_i}{dt}+\Bigl(\int_{\Omega}(h\dfrac{\curl\Gamma_i\times\mathbf{H}}{\rho}+\dfrac{\curl\Gamma_i}{\sigma(\rho)})\cdot\curl\Gamma_j\,dx\Bigr)\beta_i\\
&\qquad+\Bigl(\int_{\Omega}(\mathbf{H}\times\Theta_i)\cdot\curl\Gamma_j\,dx\Bigr)\alpha_i=0,
\end{aligned}
\right.
\end{equation}
with initial data for $i=1,\dots,n,$ defined by
\begin{equation}\label{dataODEs}
\left\{
\begin{aligned}
&\alpha_i (0)=\int_{\Omega} \mathbf{u}_{0}\cdot\Theta_i\,dx,\\
&\beta_i (0)=\int_{\Omega} \mathbf{B}_{0}\cdot\Gamma_i\,dx.
\end{aligned}
\right.
\end{equation}
Since the family $(\sqrt{\rho}\Theta_i)_{i=1,...n}$ (resp. $(\sqrt{\rho}\Gamma_i)_{i=1,...n}$) with $\rho\geq\underline{\rho}$ is free, it follows that the matrix $\bigl(\int_{\Omega}\rho\Theta_i\cdot\Theta_j\,dx\bigr)_{n\times n}$ (resp. $\bigl(\int_{\Omega}\rho\Gamma_i\cdot\Gamma_j\,dx\bigr)_{n\times n}$) is nonsingular for any $t\in[0, T].$ We note that the coefficients lie in the above ODEs are continuous on $[0, T]$ from assumptions \eqref{fixedrho}-\eqref{lb}. Thanks to the Cauchy-Lipschitz theorem, $(\alpha(t), \beta(t))$ exists uniquely and is continuous on $[0, T],$ where $\alpha(t)=(\alpha_1(t),\dots,\alpha_n(t))^T\in\R^n,$ $\beta(t)=(\beta_1(t),\dots,\beta_n(t))^T\in\R^n.$ Thus, we obtain with \eqref{ODEs} and \eqref{lub} 
\begin{equation*}
(\mathbf{u}, \mathbf{B})\in \mathcal{C}^1([0, T], \mathcal{V}_n)\times\mathcal{C}^1([0, T], \mathcal{W}_n)
\end{equation*}
In view of this regularity, we multiply the first equations in \eqref{ODEs} by $\alpha_j$ and second equations by $\beta_j,$ then we sum them  for $i, j=1,\dots,n.$ This yields
\begin{equation*}
\int_{\Omega}\rho(\partial_t\mathbf{u}+\mathbf{v}\cdot\nabla\mathbf{u})\cdot\mathbf{u}\,dx+\int_{\Omega}2\mu(\rho) |d(\mathbf{u})|^2\,dx-\int_{\Omega}(\curl\mathbf{B}\times\mathbf{H})\cdot\mathbf{u}\,dx=0,
\end{equation*}
and
\begin{equation*}
\int_{\Omega}\partial_t\mathbf{B}\cdot\mathbf{B}\,dx+ \int_{\Omega}\dfrac{|\curl\mathbf{B}|^2}{\sigma(\rho)}\,dx+\int_{\Omega}(\mathbf{H}\times\mathbf{u})\cdot\curl\mathbf{B}\,dx=0.
\end{equation*}
By integration by parts with \eqref{lv},  we get \eqref{en1}.
This completes the proof of Proposition \ref{P.l}.
\end{proof}

\subsection{Second step : a regularized approximation problem}\label{sec2.2}
In this step, we solve a non-linear regularized approximation problem by using the Schauder's fixed-point theorem (see Theorem II.3.9 in \cite{BF13}) and Proposition \ref{P.l}.

The following definition of mollifier can be found in e.g.  \cite{LE98} with several  properties. 
Define
$$\Omega_{\epsilon}:=\{x\in \Omega~|~{\rm{dist}}(x, \partial\Omega)>\epsilon\},$$
and $\eta\in\mathcal{C}^\infty(\mathbb{R}^{3})$ the standard mollifier by 
$$\eta(x)=\left\{
\begin{aligned}
&C\exp(\frac{1}{|x|^2-1})\hspace{0.4cm}\quad{\rm{if}}~~|x|<1\\
&0\hspace{2.64cm}\quad{\rm{if}}~~|x|\geq1,
\end{aligned}
\right.$$
for some normalizing constant $C$ such that $\int_{\mathbb{R}^{3}} \eta\,dx=1.$ 
Let $\epsilon\in(0, 1),$ define
$$\eta_{\epsilon}(x):=\frac{1}{\epsilon^3}\,\eta(\frac{x}{\epsilon}).$$
Now, we are ready to define approximate initial data. Since we  assumed our initial density to be bounded below by $\underline{\rho},$ we set
$$\widetilde{\rho}=\left\{
\begin{aligned}
&\rho_0\hspace*{0.58cm}~~{\rm{in}}~~\Omega\\
&\underline{\rho}\hspace*{0.75cm}~~{\rm{in}}~~\R^3\backslash\Omega,
\end{aligned}
\right.$$
and
$$\rho_0^\epsilon={\widetilde\rho_0}*\eta_\epsilon.$$
Then the initial density for the approximate
 system is defined by
 \begin{equation}\label{inapdensity}
 \rho_{n}|_{t=0}=\rho_{0}^{\epsilon}.
 \end{equation}
Thanks to assumption \eqref{inassum-rho}, it is clear for some universal constant $C_0$ independent of $\epsilon,$ we have
\begin{equation}\label{B-initial-appdensity}
{\underline{\rho}}\leq\rho^{\epsilon}_{0}\leq C_0,
\end{equation}
and $\rho_{0}^{\epsilon}\in\mathcal{C}^\infty(\overline\Omega)$
\begin{equation}\label{pinidensity}
\lim_{\epsilon\to 0} \rho^{\epsilon}_{0}=\rho_{0}~{\rm{in}}~ L^p{(\Omega)}~~~(1\leq p<\infty).
\end{equation}
We would also like to regularize $\mu(\xi),~\sigma(\xi)$ like in \cite{PL96}. Assume $\mu_{\epsilon}(\xi)$ is a $\mathcal{C}([0, \infty))$ function bounded away from zero, which is constant for $\xi\geq0$ large and such that $\sup_{[0, \infty)}|\mu_{\epsilon}-\mu|<\epsilon.$ We set
$$\widetilde{\mu_\epsilon(\rho)}=\left\{
\begin{aligned}
&\mu_{\epsilon}{(\rho)}\hspace*{0.5cm}~~{\rm{in}}~~\Omega\\
&1\hspace*{1.15cm}~~{\rm{in}}~~\R^3\backslash\Omega,
\end{aligned}
\right.$$
and define $\mu^\epsilon=\widetilde{\mu_\epsilon(\rho)}*\eta_{\epsilon}|_{\Omega}.$ We define $\sigma^\epsilon$ from $\sigma$ like $\mu^\epsilon$ from $\mu.$


We set the initial condition for approximate velocity field and magnetic field  as
\begin{equation}\label{inapvelocity}
\mathbf{u}_{n}|_{t=0}=\mathbf{u}_{0,n}^\epsilon=P_{\mathcal{V}_n}\mathbf{u}_{0}^{\epsilon}
\end{equation}
and
\begin{equation}\label{inapmagnetic}
\mathbf{B}_{n}|_{t=0}=\mathbf{B}_{0, n}^{\epsilon}=P_{\mathcal{W}_n}\mathbf{B}_0^\epsilon,
\end{equation}
where $$\mathbf{u}_0^\epsilon=((\mathbf{u}_{0}\,1_{\Omega_\epsilon})*\eta_{\epsilon}),\qquad \mathbf{B}_0^\epsilon=((\mathbf{B}_{0}\,1_{\Omega_\epsilon})*\eta_{\epsilon})$$
and $P_{\mathcal{V}_n}$ (resp. $P_{\mathcal{W}_n}$) is the orthogonal projection in $L^2(\Omega)$ onto $\mathcal{V}_n$ (resp. $\mathcal{W}_n$).

Fixed $\epsilon,$ our approximation problem is stated as follows for $n\in\mathbb{N}^{*}.$ 
\begin{definition}\label{defap}
For any given $T>0,$
we say $(\rho_n, \mathbf{u}_n, \mathbf{B}_{n}),$ with
\begin{align*}
\rho_{n}\in\mathcal{C}([0, T]\times\Omega),\quad\mathbf{u}_n\in\mathcal{C}([0, T]; \mathcal{V}_n),\quad\mathbf{B}_n\in\mathcal{C}([0, T]; \mathcal{W}_n),
\end{align*}
is a global weak solution of the following approximation problem 
\begin{align}
&\partial_t\rho_n+\div(\rho_n\mathbf{u}_n)=0\label{ap1.1},\\
&\partial_t{(\rho_n\textbf{u}_n)}+\div(\rho\mathbf{u}_n\otimes\mathbf{u}_n)-\div(2\mu^\eps(\rho_n) d(\textbf{u}_n))+\nabla P_n= \curl \textbf{B}_n\times \textbf{B}_n\label{ap1.2},\\
&\div \mathbf{u}_n=0\label{apl.3},\\
&\partial_t{ \textbf{B}_n}+\curl(\textbf{B}_n\times \textbf{u}_n+h\dfrac{\curl \textbf{B}_n\times \textbf{B}_n}{\rho_n})=-\curl(\dfrac{\curl \textbf{B}_n}{\sigma^\epsilon(\rho_n)})\label{ap1.4}, \\
&\div\mathbf{B}_n=0,\label{ap1.5}
\end{align}
with the initial conditions 
\eqref{inapdensity}, \eqref{inapvelocity}, \eqref{inapmagnetic},  and boundary conditions
\begin{equation}\label{apbcu}
\mathbf{u}_{n}|_{\partial\Omega}=0,
\end{equation}
\begin{equation}\label{apbcb}
\left\{
\begin{aligned}
&\mathbf B_n\cdot\mathbf n=0\hspace*{4.35cm}~~{\rm{on}}~~\partial\Omega,\\
&(h\dfrac{\curl\textbf{B}_n\times\textbf{B}_n}{\rho_n}+\dfrac{\curl\textbf{B}_n}{\sigma^\epsilon(\rho_n)})\times \mathbf n=0 \hspace*{0.5cm}~~{\rm{on}}~~\partial\Omega,
\end{aligned}
\right.
\end{equation}
if \eqref{ap1.1}-\eqref{apbcb} are satisfied in the weak sense of Definition \ref{defweaksolu} with the test function spaces in \eqref {wfu} and \eqref{wfb} replaced by the restriction on $\mathcal{V}_n$ and $\mathcal{W}_n,$ respectively.
\end{definition}
With the above definition, we have the following existence result.
\begin{theorem}\label{Th_ap}
There exists a global weak solution $(\rho_n, \mathbf{u}_n, \mathbf{B}_n)$ to the above initial-boundary value problem.
\end{theorem}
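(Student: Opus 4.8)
The plan is to prove Theorem~\ref{Th_ap} by a Schauder fixed-point argument built on top of the linear result of Proposition~\ref{P.l}. Fix $\epsilon$ and $n$. Given a velocity field $\mathbf{v}\in\mathcal{C}([0,T];\mathcal{V}_n)$ with $\mathbf{v}|_{\partial\Omega}=0$ and a magnetic field $\mathbf{H}\in\mathcal{C}([0,T];\mathcal{W}_n)$, I would first solve the transport equation $\partial_t\rho+\div(\rho\mathbf{v})=0$ with $\rho|_{t=0}=\rho_0^\epsilon$. Because $\mathbf{v}$ lives in a finite-dimensional space of smooth divergence-free fields vanishing on $\partial\Omega$, its flow map is a well-defined diffeomorphism of $\overline\Omega$, so $\rho$ is given explicitly by transport along the flow; this yields $\rho,\partial_t\rho\in\mathcal{C}([0,T];\mathcal{C}^1(\overline\Omega))$ and, crucially, the pointwise bounds $\underline\rho\le\rho(t,x)\le C_0$ are preserved for all time (since $\div\mathbf{v}=0$). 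Hence $\rho$ satisfies \eqref{fixedrho} with $\rho_1=\underline\rho$, $\rho_2=C_0$, and the pair $(\mathbf{v},\mathbf{H})$ together with this $\rho$ meets the hypotheses \eqref{fixedrho}--\eqref{lb} of Proposition~\ref{P.l}. Applying that proposition produces a unique $(\mathbf{u},\mathbf{B})\in\mathcal{C}^1([0,T];\mathcal{V}_n)\times\mathcal{C}^1([0,T];\mathcal{W}_n)$ solving \eqref{lwfu}--\eqref{lwfb}, and I define the map $\mathcal{F}(\mathbf{v},\mathbf{H})=(\mathbf{u},\mathbf{B})$.

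Next I would set up the functional-analytic frame for Schauder. The natural domain is a closed convex bounded subset $\mathcal{K}$ of the Banach space $\mathcal{C}([0,T];\mathcal{V}_n\times\mathcal{W}_n)$ — for instance a ball whose radius is dictated by the energy equality \eqref{en1} from Proposition~\ref{P.l}: multiplying out, $\|\mathbf{u}(t)\|_{L^2}^2$ and $\|\mathbf{B}(t)\|_{L^2}^2$ are controlled by the initial data uniformly in $n$, and since on the finite-dimensional spaces $\mathcal{V}_n,\mathcal{W}_n$ all norms are equivalent, this gives an a priori $\mathcal{C}([0,T];\mathcal{V}_n\times\mathcal{W}_n)$ bound. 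One subtlety: the fixed-point iterate must feed a \emph{divergence-free} $\mathbf{v}$ into the transport step, which is automatic since $\mathcal{F}$ lands in $\mathcal{V}_n\times\mathcal{W}_n$, and the constraint $\partial_t\rho+\div(\rho\mathbf{v})=0$ in \eqref{lv} is imposed by construction of the transport step — so $\mathcal{F}$ maps $\mathcal{K}$ into itself once the radius is chosen large enough (using that the energy bound does not see $n$, and that $\partial_t\mathbf{u},\partial_t\mathbf{B}$ are then bounded in the same finite-dimensional norm via the ODE system \eqref{ODEs}, giving equicontinuity in $t$).

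For compactness of $\mathcal{F}$, the key observation is again finite-dimensionality: $\mathcal{F}(\mathcal{K})$ is bounded in $\mathcal{C}^1([0,T];\mathcal{V}_n\times\mathcal{W}_n)$ by the ODE estimates (the coefficient matrices in \eqref{ODEs} are continuous in $t$ and, thanks to the lower bound on $\rho$, uniformly nonsingular, so $\|\tfrac{d}{dt}(\alpha,\beta)\|$ is controlled by $\|(\alpha,\beta)\|$ and the data), and $\mathcal{C}^1([0,T];X)\hookrightarrow\mathcal{C}([0,T];X)$ compactly for finite-dimensional $X$ by Arzel\`a--Ascoli. Continuity of $\mathcal{F}$ is where the real work lies and is the main obstacle: I must show that if $(\mathbf{v}_k,\mathbf{H}_k)\to(\mathbf{v},\mathbf{H})$ in $\mathcal{C}([0,T];\mathcal{V}_n\times\mathcal{W}_n)$, then the transported densities $\rho_k\to\rho$ in $\mathcal{C}([0,T];\mathcal{C}^1(\overline\Omega))$ — this uses stability of flow maps under perturbation of the (smooth, finite-dimensional) vector field and continuity of the mollified coefficients $\mu^\epsilon,\sigma^\epsilon$ as functions of $\rho$ — and then that the solutions of \eqref{lwfu}--\eqref{lwfb} depend continuously on $(\rho_k,\mathbf{v}_k,\mathbf{H}_k)$; the latter is a continuous-dependence statement for the linear ODE system \eqref{ODEs}, whose coefficients depend continuously on these data (here the uniform positive lower bound on $\rho_k$ is essential so the Hall term $h\,\curl\Gamma_i\times\mathbf{H}_k/\rho_k$ and the coefficient matrices stay well-behaved). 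Once $\mathcal{F}$ is shown to be a continuous compact self-map of $\mathcal{K}$, Schauder's theorem (Theorem~II.3.9 in \cite{BF13}) yields a fixed point $(\mathbf{u}_n,\mathbf{B}_n)$; setting $\rho_n$ to be the density transported by $\mathbf{u}_n$, the triple $(\rho_n,\mathbf{u}_n,\mathbf{B}_n)$ solves \eqref{ap1.1}--\eqref{ap1.5} in the weak sense of Definition~\ref{defap}, with the boundary conditions \eqref{apbcu}--\eqref{apbcb} built in through the choice of test-function spaces $\mathcal{V}_n,\mathcal{W}_n$, and the energy equality \eqref{en1} inherited from Proposition~\ref{P.l}. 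Since $T>0$ was arbitrary and all bounds are global in time, the solution is global.
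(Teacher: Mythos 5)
Your proposal is correct and follows essentially the same route as the paper: solve the transport equation along the flow of the finite-dimensional input velocity, invoke Proposition~\ref{P.l} for the linearized momentum/induction system, and close a Schauder fixed-point argument on a ball whose radius comes from the energy identity \eqref{en1}, with compactness and continuity of the map both resting on finite-dimensionality and the uniform lower bound on $\rho$. The only cosmetic difference is that the paper verifies compactness and continuity via uniform $\partial_t$-estimates, Aubin--Lions, and a ``precompactness plus uniqueness of the linear limit problem'' argument, whereas you argue directly through Arzel\`a--Ascoli and continuous dependence of flows and of the ODE system \eqref{ODEs} on their coefficients; both are valid and interchangeable here.
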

\begin{proof}
In order to solve this non-linear approximation problem by the Schauder's fixed-point theorem, we shall construct a operator $F_n : I_n\to I_n,$ where the convex set
\begin{equation*}
I_n :=\Bigl{\{}(\bar{\mathbf{u}}, \bar{\mathbf{B}})\in\mathcal{C}([0, T]; \mathcal{V}_n)\times\mathcal C([0, T]; \mathcal{W}_n);~\sup_{t\in[0, T]}\|(\bar{\mathbf{u}}, \bar{\mathbf{B}})\|_{L^2(\Omega)}\leq R_0\Bigr{\}},
\end{equation*}
$R_0$ is a constant to be determined later.
We denote the input of $F_n$ to be $(\bar{\mathbf{u}}_n, \bar{\mathbf{B}}_n),$
and the corresponding output $F_n(\bar{\mathbf{u}}_n, \bar{\mathbf{B}}_n)$ to be $({\mathbf{u}}_n, {\mathbf{B}}_n).$ 
Then we define our operator as follows.
At first, with the input $(\bar{\mathbf{u}}_n, \bar{\mathbf{B}}_n),$ we consider the following linear problem:
\begin{equation}\label{lap1.1}
\left\{
\begin{aligned}
&\partial_t(\rho_n)+\div(\rho_n\bar{\mathbf{u}}_n)=0,\\
&\rho_n|_{t=0}=\rho_0^\epsilon.
\end{aligned}
\right.
\end{equation}
This is a classical transport equation since, $\bar{\mathbf{u}}_n$ is regular, $\div\bar{\mathbf{u}}_n=0$ and vanishes near $\partial\Omega.$ Thus $\rho_n$ is uniquely given by
\begin{equation*}
\rho_{n}(t, x)=\rho_0^\epsilon(X(0; x, t)),\quad\forall~(t, x)\in[0, T]\times\bar\Omega,
\end{equation*}
where $X$ is the solution of the ODE
\begin{equation*}
\left\{
\begin{aligned}
&\dfrac{dX}{ds}=\bar{\mathbf{u}}_n(s, X),\\
&X(t; x, t)=x.
\end{aligned}
\right.
\end{equation*}
Obviously, we have 
\begin{equation}\label{bapdensity}
\underline{\rho}\leq\rho_n\leq C_0.
\end{equation}
Since $\rho_o^\epsilon$ is smooth, 
so $\rho_n\in\mathcal{C}^1([0, T]\times\bar\Omega)$ and is bounded in this space uniformly in $(\bar{\mathbf{u}}_n, \bar{\mathbf{B}}_n).$

Now,  we set  $\rho=\rho_n,$ $\mathbf{v}=\bar{\mathbf{u}}_n,$ $\mathbf{H}=\bar{\mathbf{B}}_n$
and replace $\mu$ by $\mu^\epsilon(\rho_n),$ $\sigma$ by $\sigma^\epsilon(\rho_n)$ and we invoke Proposition \ref{P.l} to define $(\mathbf{u}_n, \mathbf{B}_n)$ as the solution of:
\begin{equation}\label{lapwfu}
\int_{\Omega}\rho_n(\partial_t\mathbf{u}_n+\bar{\mathbf{u}}_n\cdot\nabla\mathbf{u}_n)\cdot\Phi\,dx+\int_{\Omega}2\mu^\eps(\rho_n) d(\mathbf{u}_n)\cdot d(\Phi)\,dx=\int_{\Omega}(\curl\mathbf{B}_n\times\bar{\mathbf{B}}_n)\cdot\Phi\,dx,
\end{equation}for any $\Phi\in\mathcal{V}_n,$ 
\begin{equation}\label{lapwfb}
\int_{\Omega}\partial_t\mathbf{B}_n\cdot\Psi\,dx+\int_{\Omega}\bigl(\bar{\mathbf{B}}_n\times\mathbf{u}_n+h\dfrac{\curl\mathbf{B}_n\times\bar{\mathbf{B}}_n}{\rho_n}+\dfrac{\curl\mathbf{B}_n}{\sigma^\epsilon(\rho_n)}\bigr)\cdot\curl\Psi\,dx=0,
\end{equation}for any $\Psi\in\mathcal{W}_n,$
while $\mathbf{u}_n$ and $\mathbf{B}_n$ satisfy initial condition \eqref{inapvelocity}-\eqref{inapmagnetic}.

Next, let us choose $R_0$ such that $(\mathbf{u}_n, \mathbf{B}_n)\in I_n.$  Thanks to  Proposition \ref{P.l},  $(\mathbf{u}_n, \mathbf{B}_n)$  belongs to $\mathcal{C}([0, T]; \mathcal{V}_n)\times\mathcal C([0, T]; \mathcal{W}_n)$ and satisfies
\begin{align}
\dfrac{d}{dt}\int_{\Omega}(\rho_n|\textbf{u}_n|^2+|\mathbf{B}_n|^2)\,dx+\int_{\Omega}\Bigl(4{\mu}^\epsilon(\rho_n)|d(\textbf{u}_n)|^2+2\dfrac{|\curl\textbf{B}_n|^2}{\sigma^\epsilon(\rho_n)}\Bigr)\,dx=0\label{apen}.
\end{align}
This with \eqref{bapdensity} and $\div \mathbf{u}_n=0$ lead to
\begin{equation}\label{bapub}
\sup_{t\in[0, T]}\|\mathbf{u}_n\|_{L^2(\Omega)}+\sup_{t\in[0, T]}\|\mathbf{B}_n\|_{L^2(\Omega)}+\|\mathbf{u}_n\|_{L^2(0, T; V_1)}+\|\mathbf{B}_n\|_{L^2(0, T; W_1)}\leq C_1
\end{equation}
where $C_1$ is a constant independent of $R_0, \bar{\mathbf{u}}_{n}, \bar{\mathbf{B}}_n.$ Hence, by taking $R_0=C_1,$ we have $(\mathbf{u}_n, \mathbf{B}_n)\in I_n.$

Then we prove the compactness of mapping $F_n.$ In fact, with uniform bound \eqref{bapub} in hand, in view of the famous Aubin-Lions Lemma (see Theorem II. 5.16 in  \cite{BF13}), we only need to show some uniform bounds for  $\partial_t\mathbf{u}_n$ and $\partial_t\mathbf{B}_n$ in suitable spaces.
Thanks to Proposition \ref{P.l} again, we know  that actually $(\partial_t\mathbf{u}_n, \partial_t\mathbf{B}_n)$ belongs to $\mathcal{C}([0, T]; \mathcal{V}_n)\times\mathcal C([0, T]; \mathcal{W}_n)$. Hence by taking $\Phi=\partial_t\mathbf{u}_n,~\Psi=\partial_t\mathbf{B}_n$ in \eqref{lapwfu}-\eqref{lapwfb}, we have
\begin{align*}
&\int_{\Omega}(\rho_n|\partial_t\mathbf{u}_n|^2+|\partial_t\mathbf{B}_n|^2)\,dx\notag\\
&\quad=-\int_{\Omega}\bigl((\rho_n\bar{\mathbf{u}}_n\cdot\nabla\mathbf u_n)\cdot\partial_t\mathbf{u}_n+2\mu^\epsilon(\rho_n) d(\mathbf{u}_n)\cdot d(\mathbf{\partial_t\mathbf{u}_n})\bigr)\,dx\notag\\
&\quad\quad-\int_{\Omega}\Bigl(\bar{\mathbf{B}}_n\times\mathbf{u}_n+h\dfrac{\curl\mathbf{B}_n\times\bar{\mathbf{B}}_n}{\rho_n}+\dfrac{\curl\mathbf{B}_n}{\sigma^\epsilon(\rho_n)}\Bigr)\cdot\curl(\partial_t\mathbf{B}_n)\,dx\notag\\
&\quad\quad+\int_{\Omega}(\curl\mathbf{B}_n\times\bar{\mathbf{B}}_n)\cdot\partial_t\mathbf{u}_n\,dx.
\end{align*}
Since all norms in a finite dimensional space are equivalent and thanks to \eqref{bapub}, we obtain by H\"older's inequality
\begin{align}
&\underline{\rho}\|\partial_t\mathbf{u}_n\|_{L^2(\Omega)}^2+\|\partial_t\mathbf{B}_n\|_{L^2(\Omega)}^2\notag\\
\leq& C_0\|\bar{\mathbf{u}}_n\|_{L^\infty(\Omega)}\|\nabla\mathbf u_n\|_{L^2(\Omega)}\|\partial_t\mathbf{u}_n\|_{L^2(\Omega)}+2\bar{\mu}\|\nabla\mathbf{u}_n\|_{L^2(\Omega)}\| \nabla(\mathbf{\partial_t\mathbf{u}_n})\|_{L^2(\Omega)}\notag\\
&+\bigl(\|\bar{\mathbf{B}}_n\|_{L^\infty(\Omega)}\|\mathbf{u}_n\|_{L^2(\Omega)}+\frac{h}{\underline{\rho}}\|\curl\mathbf{B}_n\|_{L^2(\Omega)}\|\bar{\mathbf{B}}_n\|_{L^\infty(\Omega)}+\frac{1}{\underline{\sigma}}\|\curl\mathbf{B}_n\|_{L^2(\Omega)}\bigr)\notag\\
&\cdot\|\curl(\partial_t\mathbf{B}_n)\|_{L^2(\Omega)}+
\|\curl\mathbf{B}_n\|_{L^2(\Omega)}\|\bar{\mathbf{B}}_n\|_{L^\infty(\Omega)}\|\partial_t\mathbf{u}_n\|_{L^2(\Omega)}\notag\\
\leq& C\,(\|\partial_t\mathbf{u}_n\|_{L^2(\Omega)}+\|\partial_t\mathbf{B}_n\|_{L^2(\Omega)})\notag
\end{align}
and thus 
\begin{equation}\label{lapts}
\|\partial_t\mathbf{u}_n\|_{L^2(\Omega)}+\|\partial_t\mathbf{B}_n\|_{L^2(\Omega)}\leq C,
\end{equation}
where $C$ is a constant independent on $n, \epsilon.$
We conclude that $F_n$ is a compact operator on $I_n$ to itself.

In order to apply the Schauder's theorem, we still have to check the continuity of $F_n.$ It suffices to prove that the mapping is sequentially continuous. Let $\{(\bar{\mathbf{u}}_n^m, \bar{\mathbf{B}}_n^m)\}_{m\geq1}\subset I_n$ be a sequence which strongly converges to $(\bar{\mathbf{u}}_n, \bar{\mathbf{B}}_n)$ in $I_n.$ Recall our definition of mapping $F_n,$ we denote $\rho_n^m$ as the corresponding solution to \eqref{lap1.1} and let $(\mathbf{u}_n^m, \mathbf{B}_n^m)=F_n(\bar{\mathbf{u}}_n^m, \bar{\mathbf{B}}_n^m)$ as the corresponding solution to \eqref{lapwfu}-\eqref{lapwfb}.  It is clear when solving \eqref{lap1.1}, we have that $\{\rho_n^m\}_{m\geq1}$ is bounded in $\mathcal{C}^1([0, T]\times\bar\Omega)$ uniformly in $\{(\bar{\mathbf{u}}_n^m, \bar{\mathbf{B}}_n^m)\}_{m\geq1}.$ Thus Aubin-Lions Lemma implies that $\{\rho_n^m\}_{m\geq1}$ is pre-compact in $\mathcal{C}([0, T]\times\bar\Omega).$ For $\{(\mathbf{u}_n^m, \mathbf{B}_n^m)\}_{m\geq1},$ we know it is a subset of $I_n$ and the control of $\{(\partial_t\mathbf{u}_n^m, \partial_t\mathbf{B}_n^m)\}_{m\geq1}$ in $L^\infty(0, T; L^2(\Omega))$ can also be obtained by following a same procedure as to get \eqref{lapts}.
So one more application of the Aubin-Lions Lemma gives that $\{(\mathbf{u}_n^m, \mathbf{B}_n^m)\}_{m\geq1}$ is pre-compact in $\mathcal C ([0, T]; L^2(\Omega)).$ Without loss of generality, as we can always replace our original sequence by a weakly converging subsequence,   we conclude that $(\mathbf{u}_n^m, \mathbf{B}_n^m)$ converges to ${(\mathbf{u}_n, \mathbf{B}_n)}$ a solution of \eqref{lwfu}-\eqref{lwfb} when $m$ goes to $\infty.$  Since problem \eqref{lap1.1} and \eqref{lapwfu}-\eqref{lapwfb} are all linear problems and the solutions are shown to be unique, one has  ${(\mathbf{u}_n, \mathbf{B}_n)}=F_n(\bar{\mathbf{u}}_n, \bar{\mathbf{B}}_n),$ that is $(\mathbf{u}_n^m, \mathbf{B}_n^m)$ converges to $F_n(\bar{\mathbf{u}}_n, \bar{\mathbf{B}}_n)$ as $m\to\infty.$

From the Schauder's fixed-point theorem,  there exists a fixed point $(\mathbf{u}_n, \mathbf{B}_n)$ of $F_n$ in $I_n.$  It means that with input $(\mathbf{u}_n, \mathbf{B}_n)$ one can well-define $\rho_n$ as the solution of \eqref{ap1.1}. Moreover, following the definition of output,  by using \eqref{ap1.1} and the regularity of $\mathbf{u}_n, \rho_n,$ one can rewrite \eqref{lapwfu} as
\begin{align}
&\int_{\Omega}\partial_t(\rho_n\mathbf{u}_n)\cdot\Phi\,dx-\int_{\Omega}\rho_n\mathbf{u}_n\otimes{\mathbf{u}}_n\cdot\nabla\Phi\,dx+\int_{\Omega}2\mu^\eps(\rho_n) d(\mathbf{u}_n)\cdot d(\Phi)\,dx\notag\\
&\quad\quad=\int_{\Omega}(\curl\mathbf{B}_n\times{\mathbf{B}}_n)\cdot\Phi\,dx\label{apwfu},
\end{align}
and get
\begin{equation}\label{apwfb}
\int_{\Omega}\partial_t\mathbf{B}_n\cdot\Psi\,dx+\int_{\Omega}\bigl({\mathbf{B}}_n\times\mathbf{u}_n+h\dfrac{\curl\mathbf{B}_n\times{\mathbf{B}}_n}{\rho_n}+\dfrac{\curl\mathbf{B}_n}{\sigma^\epsilon(\rho_n)}\bigr)\cdot\curl\Psi\,dx=0,
\end{equation}

After time integration on \eqref{ap1.1}, \eqref{apwfu}, \eqref{apwfb} over $[0, T],$ we infer that $(\mathbf{u}_n, \mathbf{B}_n)$ satisfies the weak formulations in the Definition \ref{defap}.
Finally, we prove \eqref{meas}. Let $\gamma_m$ be a function of $\mathcal{C}^1(\R; \R).$ 
 Multiplying \eqref{ap1.1} by $\gamma^{'}_m(\rho_n)$ and using $\div \mathbf{u}_n=0$ we have
 \begin{align*}
\partial_t\gamma_m(\rho_n)+\mathbf{u}_n\cdot\nabla\gamma_m(\rho_n)=0.
 \end{align*}
We integrate this equation on $[0, T]\times\Omega$ and use again that $\div \mathbf{u}_n=0,$
with $\mathbf{u}_n$ vanishes near the boundary to obtain
\begin{align}
\int_{\Omega}\gamma_m(\rho_n(t, x))\,dx=\int_{\Omega}\gamma_m(\rho_{0}^\epsilon(x))\,dx\label{apmeas}.
 \end{align}
 For $0\leq\alpha\leq\beta<\infty$ we choose for $m$ large enough $0\leq\gamma_m\leq1$ such that
 \begin{equation*}
\left\{
\begin{aligned}
&\gamma_m(\lambda)=0 \quad{\rm{if}}~\lambda\notin[\alpha, \beta],\\
&\gamma_m(\lambda)=1\quad{\rm{if}}~\lambda\in[\alpha+\frac1m, \beta-\frac1m].
\end{aligned}
\right.
\end{equation*}
 Letting $m\to\infty$ in \eqref{apmeas} we deduce that \eqref{meas} holds for $\rho_n,$
 \begin{align*}
\int_{\Omega}1_{[\alpha, \beta]}(\rho_n(t, x))\,dx=\int_{\Omega}1_{[\alpha, \beta]}(\rho_{0}^\epsilon(x))\,dx
 \end{align*}
 where $1_{[\alpha, \beta]}(\lambda)$ is the characteristic function on $[\alpha, \beta].$
 This completes the proof of Theorem \ref{Th_ap}.
\end{proof}

\section{Convergence of the approximation problem}\label{sec.3}

The aim of this last section is to prove our main Theorem \ref{Th_1} by passing to the limit in the regularized approximation problem stated in Definition \ref{defap} as $n\to\infty$ and $\epsilon\to 0$. The fundamental tool is a compactness result due to P.-L. Lions \cite{PL96} that we recall here for its importance.
\begin{theorem}\label{Lions}
We suppose that two sequences $\rho_n$ and $\mathbf{u}_n$ are given satisfying $\rho_n\in\mathcal{C}([0, T]; L^1(\Omega)),$ $0\leq\rho_n\leq C$ a.e. on $(0, T)\times\Omega,$ $u_n\in L^2(0, T; H^1_0(\Omega)),$ $\|\mathbf{u}_n\|_{L^2(0, T; H^1_0(\Omega))}\leq C$ and $\div \mathbf{u}_n=0$ ($C$ denotes various constants independent of $n$). We  assume :
\begin{equation*}
\left\{
\begin{aligned}
&\partial_t\rho_n+\div(\rho_n{\mathbf{u}}_n)=0 \quad{\rm{in}}\quad\mathcal{D}^{'}((0, T)\times\Omega),\\
&\rho_n|_{t=0}=\rho_{0n},
\end{aligned}
\right.
\end{equation*}
and
\begin{align*}
&\rho_{0n}\to\rho_0\quad{\rm{in}}\quad L^1(\Omega),\\
&\mathbf{u}_n\rightharpoonup\mathbf{u}\quad{\rm{weakly}}\quad{\rm{in}}\quad L^2(0, T; H^1_0(\Omega)).
\end{align*}
Then :
\begin{enumerate}
\item $\rho_n$ converges in $\mathcal{C}([0, T]; L^p(\Omega))$ for all $1\leq p<\infty$ to the unique $\rho$ belonging to $\mathcal{C}([0, T]; L^1(\Omega))$ bounded on $(0, T)\times\Omega$ solution of 
\begin{equation*}
\left\{
\begin{aligned}
&\partial_t\rho+\div(\rho{\mathbf{u}})=0 \quad{\rm{in}}~~\mathcal{D}^{'}((0, T)\times\Omega),\\
&\rho|_{t=0}=\rho_0 ~~\qquad\qquad{\rm{a.e.~~ in}}~~\Omega.
\end{aligned}
\right.
\end{equation*}
\item We assume in addition that $\rho_n|\mathbf{u_n|^2}$ is bounded in $L^\infty(0, T; L^1(\Omega))$ and that we have for some $l\geq1$
$$|<\partial_t(\rho_n\mathbf{u}_n), \Phi>|\leq C\|\Phi\|_{L^2(0, T; H^l(\Omega))}$$
for all $\Phi\in\mathcal{D}((0, T)\times\Omega)$ such that $\div \Phi=0$ on $(0, T)\times\Omega.$ Then: 
\begin{align*}
&\sqrt{\rho_{n}}\mathbf{u}_n\to\sqrt{\rho}\mathbf{u}\quad{\rm{in}}\quad L^q(0, T; L^r(\Omega))\quad{\rm{for}}~2<q<\infty,~1\leq r<\frac{6q}{3q-4}\\
&\mathbf{u}_n\to\mathbf{u}\quad{\rm{in}}\quad L^\theta(0, T; L^{3\theta}(\Omega))\quad{\rm{for}}~1\leq\theta<2\quad{\rm{on~the~set}} ~\{(t, x)|~\rho(t, x)>0\}.
\end{align*}
\end{enumerate}
\end{theorem}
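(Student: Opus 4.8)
\textbf{Proof proposal for Theorem \ref{Lions}.}

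The plan is to follow the strategy of P.-L.~Lions for the density-dependent Navier--Stokes equations, isolating those parts of the argument that are purely transport-theoretic from the compactness-of-momentum part. For item (1), the key is the theory of renormalized solutions of the transport equation. First I would observe that, since $\mathbf{u}_n$ is bounded in $L^2(0,T;H^1_0(\Omega))$ with $\div\mathbf{u}_n=0$, the DiPerna--Lions commutator lemma guarantees that each $\rho_n$ is a renormalized solution: for every $\beta\in\mathcal C^1(\R)$ with $\beta'$ bounded one has $\partial_t\beta(\rho_n)+\div(\beta(\rho_n)\mathbf{u}_n)=0$ in $\mathcal D'$. Extracting a weak-$*$ limit $\rho$ of $\rho_n$ in $L^\infty$ (and of $\beta(\rho_n)$ to some $\overline{\beta(\rho)}$), one passes to the limit in the linear-in-$\rho_n$ equation $\partial_t\rho_n+\div(\rho_n\mathbf{u}_n)=0$ using the weak convergence of $\mathbf{u}_n$ in $L^2_tH^1_x$ against $\rho_n$ bounded in $L^\infty$; this identifies $\rho$ as a solution of the limit transport equation with datum $\rho_0$, and the limit $\rho$ is itself renormalized. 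Uniqueness of the renormalized solution with $L^\infty$ datum (again DiPerna--Lions) then forces $\overline{\beta(\rho)}=\beta(\rho)$, which upgrades weak to strong convergence: taking $\beta(s)\approx s^2$ gives $\rho_n\to\rho$ in $L^2((0,T)\times\Omega)$ and, interpolating with the uniform $L^\infty$ bound, in $\mathcal C([0,T];L^p)$ for all finite $p$. The continuity in time comes from the equation itself once strong spatial convergence is known.

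For item (2), the idea is to apply an Aubin--Lions-type argument not to $\mathbf{u}_n$ directly (it need not be compact where $\rho$ vanishes) but to the momentum $\rho_n\mathbf{u}_n$. From $\rho_n$ bounded in $L^\infty$ and $\mathbf{u}_n$ bounded in $L^2_tH^1_x\hookrightarrow L^2_tL^6_x$, together with $\rho_n|\mathbf{u}_n|^2$ bounded in $L^\infty_tL^1_x$, one gets $\rho_n\mathbf{u}_n=\sqrt{\rho_n}\cdot\sqrt{\rho_n}\mathbf{u}_n$ bounded in $L^\infty_tL^2_x$ and, by interpolation, $\rho_n\mathbf{u}_n$ bounded in $L^2_t L^{3/2}_x$ or better. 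The hypothesis $|\langle\partial_t(\rho_n\mathbf{u}_n),\Phi\rangle|\le C\|\Phi\|_{L^2_tH^l_x}$ gives $\partial_t(\rho_n\mathbf{u}_n)$ bounded in $L^2(0,T;H^{-l}_\sigma)$ (on divergence-free test fields). Combined with the spatial bound and the compact embedding $L^2_x\hookrightarrow\hookrightarrow H^{-l}_x$ on the bounded domain $\Omega$, Aubin--Lions yields strong convergence of (the Leray projection of) $\rho_n\mathbf{u}_n$ in, say, $\mathcal C([0,T];H^{-1}(\Omega))$ and hence, after interpolation with an $L^p$ bound, in $L^2(0,T;H^{-1})$ or in $L^q_tL^r_x$ for a suitable range. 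Then I would combine this with the strong convergence $\sqrt{\rho_n}\to\sqrt{\rho}$ in every $L^p$ (a consequence of item (1)) and the weak convergence $\sqrt{\rho_n}\mathbf{u}_n\rightharpoonup\sqrt{\rho}\mathbf{u}$ to deduce that the weak limit of $\rho_n\mathbf{u}_n$ is $\rho\mathbf{u}$, identify $\sqrt{\rho_n}\mathbf{u}_n=\rho_n\mathbf{u}_n/\sqrt{\rho_n}$ strongly on the set $\{\rho>0\}$, and recover the stated $L^q_tL^r_x$ convergence of $\sqrt{\rho_n}\mathbf{u}_n$ by interpolating the strong low-norm convergence against the uniform high-integrability bound coming from $L^\infty_tL^2_x\cap L^2_tL^6_x$ (which gives, by the standard parabolic interpolation, a bound in $L^{q}_tL^r_x$ with $2/q+3/r=3/2$). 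Finally, on $\{\rho>0\}$ one writes $\mathbf{u}_n=\sqrt{\rho_n}\mathbf{u}_n/\sqrt{\rho_n}$ and uses that $\sqrt{\rho_n}\to\sqrt{\rho}>0$ locally, together with the $L^2_tL^6_x$ bound on $\mathbf{u}_n$, to conclude $\mathbf{u}_n\to\mathbf{u}$ in $L^\theta(0,T;L^{3\theta}(\Omega))$ for $1\le\theta<2$ on that set.

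The main obstacle, and the place where Lions's theorem earns its keep, is the passage from weak to strong convergence of $\rho_n$ via the renormalization/uniqueness argument: one must know that $\mathbf{u}_n$ is regular enough (here $L^2_tH^1_x$, which is exactly the Sobolev exponent required by DiPerna--Lions in three dimensions) for the commutator $\div(\rho_n\mathbf{u}_n)-$ (regularized version) to tend to zero, and that the limiting velocity $\mathbf{u}$ still lies in that class so that the limit equation also has a unique renormalized solution. A secondary subtlety is that on the vacuum set $\{\rho=0\}$ no information on $\mathbf{u}_n$ survives, which is why the statement about $\mathbf{u}_n$ is restricted to $\{\rho>0\}$; handling the interface between the two sets requires care with the cutoff functions $\mathbf 1_{\{\rho>\delta\}}$ and a limit $\delta\to0$. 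Everything else — the Aubin--Lions application to $\rho_n\mathbf{u}_n$, the interpolation bookkeeping for the exponents $q,r,\theta$, and the identification of weak limits of products — is routine once these two points are in place. Since the full proof is carried out in detail in \cite{PL96}, I would at this stage simply refer to it.
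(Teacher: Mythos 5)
The paper does not prove this statement at all: Theorem \ref{Lions} is quoted verbatim as a known compactness result of P.-L.~Lions and the reader is simply referred to \cite{PL96}, which is also where your proposal ends up. Your sketch — renormalized solutions and the DiPerna--Lions commutator/uniqueness argument to upgrade weak to strong convergence of $\rho_n$ for part (1), and Aubin--Lions applied to the momentum $\rho_n\mathbf{u}_n$ together with the kinetic-energy identification and the cutoff $\mathbf 1_{\{\rho>\delta\}}$ for part (2) — is an accurate outline of the argument actually carried out in that reference, so this is consistent with the paper's treatment.
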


\subsection{Pass to the limit as \texorpdfstring{$n\to\infty$}{TEXT}}\label{sec3.1}

For fixed $\epsilon,$ we denote by $(\rho_n, \mathbf{u}_n, \mathbf{B}_n)$ the smooth approximate solution given by Theorem \ref{Th_ap}. It is clear that from \eqref{apen}, for any fixed $T>0,$ one has
\begin{align}
&\int_{\Omega}(\rho_n|\textbf{u}_n|^2\,dx+|\textbf{B}_n|^2)\,dx+\int_0^T\int_{\Omega}\Bigl(4\mu^\epsilon(\rho_n)|d(\textbf{u}_n)|^2+2\dfrac{|\curl\textbf{B}_n|^2}{\sigma^\epsilon(\rho_n)}\Bigr)\,dxdt\notag\\
&\quad\leq\int_{\Omega} (\rho_0^\epsilon|\mathbf{u}_{0}^\epsilon|^2+|\textbf{B}_0^\epsilon|^2)\,dx\label{apein}.
\end{align}
Note that $\rho_n$ is  bounded away from 0 uniformly, then we have the following bounds independent of $n$:
\begin{align}
&\|\rho_n|\mathbf{u}_n|^2\|_{L^\infty(0, T; L^1(\Omega))}\leq C\label{bm},\\
&\|\mathbf{u}_n\|_{L^\infty(0, T; L^2(\Omega))}\leq C,\\
&\|\mathbf{u}_n\|_{L^2(0, T; V_1)}\leq C\label{bu1},\\
&\|\mathbf{B}_n\|_{L^\infty(0, T; L^2(\Omega))}\leq C\label{bb},\\
&\|\mathbf{B}_n\|_{L^2(0, T; W_1)}\leq C\label{bb1},
\end{align}
which implies that as $n$ goes to infinity, up to extraction (we will extract subsequence if necessary),
\begin{align*}
&\mathbf{u}_n\rightharpoonup\mathbf{u}^{\epsilon}\quad{\rm{weakly}}\quad{\rm{in}}\quad L^2(0, T; V_1),\quad\mathbf{u}_n\rightharpoonup\mathbf{u}^{\epsilon}\quad{\rm{weakly^*}}\quad{\rm{in}}\quad L^\infty(0, T; V)\\
&\mathbf{B}_n\rightharpoonup\mathbf{B}^{\epsilon}\quad{\rm{weakly}}\quad{\rm{in}}\quad L^2(0, T; W_1), \quad\mathbf{B}_n\rightharpoonup\mathbf{B}^{\epsilon}\quad{\rm{weakly^*}}\quad{\rm{in}}\quad L^\infty(0, T; W).
\end{align*}

In view of \eqref{pinidensity}, \eqref{bapdensity} and \eqref{bu1}, the first assertion of Theorem \ref{Lions} implies that
\begin{equation}\label{cdensity}
\rho_n\to\rho^{\epsilon}\quad{\rm{in}}\quad\mathcal{C}([0, T]; L^p(\Omega))\quad{\rm {with} }\quad1\leq p<\infty
\end{equation}
and
$$\partial_t\rho^{\epsilon}+\div(\rho^{\epsilon}\mathbf{u^{\epsilon}})=0.$$
Our goal is now  to pass to the limit as $n$ goes to infinity in the following weak formulations  for approximation problem that have stated in Definition \ref{defap} :
\begin{align}
&\int_{0}^T\int_{\Omega}\bigl(-\rho_n\mathbf{u}_n\cdot\partial_t\Phi\,dxdt-\rho_n\mathbf{u}_n\otimes{\mathbf{u}}_n\cdot\nabla\Phi+2\mu^\eps(\rho_n) d(\mathbf{u}_n)\cdot d(\Phi)\bigr)\,dx\notag\\
&\quad\quad=-\int_{0}^T\int_{\Omega}(\curl\mathbf{B}_n\times{\mathbf{B}}_n)\cdot\Phi\,dxdt+\int_{\Omega}\rho_{0}^\epsilon\mathbf{u}_{0, n}^\epsilon\cdot\Phi(0, x)\,dx\label{limwfu},
\end{align}
for any $\Phi\in\mathcal{C}^1([0, T]\times\mathcal{V}_n)$ with $\div\Phi=0$ and $\Phi(T, \cdot)=0,$
\begin{align}
&\int_{0}^T\int_{\Omega}\bigl(-\mathbf{B}_n\cdot\partial_t\Psi+\bigl({\mathbf{B}}_n\times\mathbf{u}_n+h\dfrac{\curl\mathbf{B}_n\times{\mathbf{B}}_n}{\rho_n}+\dfrac{\curl\mathbf{B}_n}{\sigma^\epsilon(\rho_n)}\bigr)\cdot\curl\Psi\bigr)\,dxdt\notag\\
&\quad=\int_{\Omega}\mathbf{B}_{0, n}^\epsilon\cdot\Psi(0, x)\,ds\label{limwfb},
\end{align}
for any $\Psi\in\mathcal{C}^1([0, T]\times\mathcal{W}_n)$ with $\Psi(T, \cdot)=0.$

It turns out that we need to apply the second part of Theorem \ref{Lions}  to pass to the limit.
Since we already get \eqref{bm}, let us estimate $<\partial_t(\rho_n\mathbf{u}_n), \Phi>$ from equality \eqref{apwfu} with $\Phi\in\mathcal{D}((0, T)\times\Omega),~\div\Phi=0.$

First,  by Sobolev embeddings : ${H}^1(\Omega)\hookrightarrow L^6(\Omega), ~{H}^\frac12(\Omega)\hookrightarrow L^3(\Omega),$ we have
\begin{align*}
&|\int_0^T\int_{\Omega}\rho_n\mathbf{u}_n\otimes\mathbf{u}_n\cdot\nabla\Phi\,dxdt|\\
\leq&\|\rho_n\|_{L^\infty((0, T)\times\Omega)}\|\mathbf{u}_n\|_{L^\infty_T (L^2)}\|\mathbf{u}_n\|_{L^2_T(L^6)}\|\nabla\Phi\|_{L^2_T(L^3)}\\
\leq& C_0C^2\|\Phi\|_{L^2_T({H}^\frac32)},
\end{align*}
 and by $\div\mathbf{u}_n=0,~\div\Phi=0,$
\begin{align*}
&|\int_0^T\int_{\Omega} 2\mu^\epsilon(\rho_n)d(\mathbf{u}_n)\cdot d(\Phi)\,dxdt|\\
\leq&C\|{\mu}^\epsilon\|_{L^\infty([0, T]\times\Omega)}\|d(\mathbf{u}_n)\|_{L^2_T(L^2)}\|d(\Phi)\|_{L^2_T(L^2)}\\
\leq& C^2\|\Phi\|_{L^2_T({H}^1)}.
\end{align*}
Using the inequality $\|\Phi\|_{L^\infty(\Omega)}\leq C\|\Phi\|_{H^{s}}$ for $s>\frac32,$ we have
\begin{align*}
&|\int_0^T\int_{\Omega} (\curl\mathbf{B}_n\times\mathbf{B}_n)\cdot \Phi\,dxdt|\\
\leq&\|\curl\mathbf{B}_n)\|_{L^2_T(L^2)}\|\mathbf{B}_n\|_{L^\infty_T(L^2)}\|\Phi\|_{L^2_T(L^\infty)}\\
\leq& C^2\|\Phi\|_{L^2_T({H}^s)}.
\end{align*}
Therefore for any $l>\frac32,$ the second assertion of Theorem \ref{Lions} with strong convergence of $\rho_n$ then imply that as $n\to \infty$
\begin{align}
&{\rho_{n}}\mathbf{u}_n\to{\rho^\epsilon}\mathbf{u}^\epsilon\quad{\rm{in}}\quad L^q(0, T; L^r(\Omega))\quad{\rm{for}}~2<q<\infty,~1\leq r<\frac{6q}{3q-4}\label{cmo}\\
&\mathbf{u}_n\to\mathbf{u}^\epsilon\quad{\rm{in}}\quad L^\theta(0, T; L^{3\theta}(\Omega))\quad{\rm{for}}~1\leq\theta<2\label{scu}.
\end{align}

Let us prove the strong convergence of $\mathbf{B}_n$ to $\mathbf{B}^\epsilon$ in $L^2(0, T; W)$ at this moment. Indeed, from formula \eqref{apwfb},  for any $\Psi\in L^4(0, T; H^2(\Omega))$, one has
\begin{align*}
&|<\partial_t\mathbf{B}_n\cdot\Psi>|\\
\leq& |\int_0^T\int_{\Omega}\bigl(\mathbf{B}_n\times\mathbf{u}_n+h\dfrac{\curl\mathbf{B}_n\times\mathbf{B}_n}{\rho_n}+\dfrac{\curl\mathbf{B}_n}{\sigma^\epsilon(\rho_n)}\bigr)\cdot\curl\Psi\,dxdt|\\
\leq&\|\mathbf{B}_n\|_{L^4_T(L^3)}\|\mathbf{u}_n\|_{L^2_T (L^6)}\|\nabla\Phi\|_{L^4_T(L^2)}+\frac{h}{\underline{\rho}}\|\curl\mathbf{B}_n\|_{L^2_T(L^2)}\|\mathbf{B}_n\|_{L^4_T(L^3)}\|\nabla\Phi\|_{L^4_T(L^6)}\\
&+\frac{1}{\underline{\sigma}}\|\curl\mathbf{B}_n\|_{L^2_T(L^2)}\|\nabla\Phi\|_{L^2_T(L^2)}\\
\leq&C^2\|\Phi\|_{L^4_T({H}^2)},
\end{align*}
and thus $\{\partial_t \mathbf{B}_n\}_{n\geq1}$ is bounded in $L^\frac43(0, T; H^{-2}).$ 
\begin{remark}\label{Re-low-bound-rho}
If there is no positive lower bound to initial density but only $\rho_0\geq0,$ for fixed $\epsilon$ we can define the initial condition for approximate density as in \cite{PL96, GB97}, where $\rho^n\geq\epsilon$ is constructed. However, when passing to the limit as $\epsilon\to0$ we will lost above uniform bound with respect to $\epsilon$ due to the appearance of Hall-effect term.  This is the reason why we need to assume $\inf\rho_0>0.$
\end{remark}

 Keeping \eqref{bb1} in mind, we infer from the  Aubin-Lions Lemma  that
\begin{equation}\label{cb}
\mathbf{B}_n\to\mathbf{B}\quad{\rm{in}}\quad L^2(0, T; L^2(\Omega)).
\end{equation}
The weak and strong convergences obtained for $\rho_n, \mathbf{u}_n$ and $\mathbf{B}_n$ enable us to pass to the limit in the weak formulations \eqref{limwfu}-\eqref{limwfb} like in \cite{GB97},  except for the Hall-term. To deal with it, we write
\begin{align*}
&\dfrac{\curl\mathbf{B}_n\times\mathbf{B}_n}{\rho_n}-\dfrac{\curl\mathbf{B}\times\mathbf{B}}{\rho}\\
=&\dfrac{\rho-\rho_n}{\rho_n\rho}({\curl\mathbf{B}_n\times\mathbf{B}_n})+\dfrac{\curl\mathbf{B}_n\times(\mathbf{B}_n-\mathbf B)}{{\rho}}+\dfrac{(\curl\mathbf{B}_n-\curl\mathbf B)\times\mathbf{B}}{{\rho}}.
\end{align*}
Thanks to \eqref{bb}, \eqref{bb1} and \eqref{cdensity}  we know that the first term strongly tends to $0$
in $L^1(0, T; L^\frac65(\Omega)),$ while the second one strongly tends to $0$ in $L^1(0, T; L^1(\Omega))$ due to \eqref{cb}. Finally, the third term weakly tends to $0$ in $L^1(0, T; L^\frac32(\Omega))$ since $\curl\mathbf{B}_n$ is weakly convergent to $\curl B$ in $L^2((0, T)\times \Omega)$ and $\mathbf{B}$ lies in $L^2(0, T; L^6(\Omega)),$ $\rho$ lies in $L^\infty((0, T)\times\Omega).$

For the initial values, by definition of $\mathbf{u}_{0, n}^\epsilon$ and $\mathbf{B}_{0, n}^\epsilon,$ we have
\begin{align*}
\int_{\Omega}\rho_{0}^\epsilon\mathbf{u}_{0, n}^\epsilon\cdot\Phi(0, x)\,dx\to\int_{\Omega}\rho_0^\epsilon\mathbf{u}_0^\epsilon\cdot\Phi(0, x)\,dx=\int_{\Omega}\mathbf{m}_0^\epsilon\cdot\Phi(0, x)\,dx,
\end{align*}
\begin{align*}
\int_{\Omega}\mathbf{B}_{0, n}^\epsilon\cdot\Psi(0, x)\,dx\to\int_{\Omega}\mathbf{B}_0^\epsilon\cdot\Phi(0, x)\,dx.
\end{align*}
In conclusion, passing to the limit in \eqref{limwfu}-\eqref{limwfb} and energy inequality \eqref{apein}, we have obtained the following result :
\begin{proposition}
For any $T>0,$ there is a solution $(\rho^\epsilon, \mathbf{u}^\epsilon, \mathbf{B}^\epsilon)$
which satisfies
\begin{align*}
&\partial_t\rho+\div(\rho\mathbf{u})=0,\\
&\partial_t{(\rho\textbf{u})}+\div(\rho\mathbf{u}\otimes\mathbf{u})-\div(2\mu^\eps(\rho) d(\textbf{u}))+\nabla P= \curl \textbf{B}\times \textbf{B},\\
&\div \mathbf{u}=0,\\
&\partial_t{ \textbf{B}}+\curl(\textbf{B}\times \textbf{u}+h\dfrac{\curl \textbf{B}\times \textbf{B}}{\rho})=-\curl(\dfrac{\curl \textbf{B}}{\sigma^\epsilon(\rho)}), \\
&\div\mathbf{B}=0,
\end{align*}
 in the sense of Definition \ref{defap} with the initial data 
\begin{align*}
\rho|_{t=0}=\rho_0^\epsilon,\quad\mathbf{u}|_{t=0}=\mathbf{u}_0^\epsilon\quad\mathbf{B}|_{t=0}=\mathbf{B}_0^\epsilon.
\end{align*}
Moreover, the solution satisfies the following energy inequality :
\begin{align}
&\int_{\Omega}(\rho^\epsilon|\textbf{u}^\epsilon|^2\,dx+|\textbf{B}^\epsilon|^2)\,dx+\int_0^T\int_{\Omega}\mu^\epsilon(\rho^\epsilon)|\nabla\textbf{u}^\epsilon|^2\,dxdt
+2\int_0^T\int_{\Omega}\dfrac{|\curl\textbf{B}^\epsilon|^2}{\sigma^\epsilon(\rho^\epsilon)}\,dxdt\notag\\
&\quad\leq\int_{\Omega} (\rho_0^\epsilon|\mathbf{u}_{0}^\epsilon|^2+|\textbf{B}_0^\epsilon|^2)\,dx\label{enineps},
\end{align}
and
 \begin{align}
\int_{\Omega}1_{[\alpha, \beta]}(\rho^\epsilon(t, x))\,dx=\int_{\Omega}1_{[\alpha, \beta]}(\rho_{0}^\epsilon(x))\,dx\label{apemeas}.
 \end{align}
\end{proposition}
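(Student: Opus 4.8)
The plan is to combine the uniform energy bounds \eqref{bm}--\eqref{bb1} with the two parts of Lions' compactness theorem (Theorem \ref{Lions}) and then upgrade the compactness of $\mathbf{B}_n$ via Aubin--Lions, so that every nonlinear term in \eqref{limwfu}--\eqref{limwfb} converges. First I would record the weak/weak-$*$ limits of $\mathbf{u}_n$ and $\mathbf{B}_n$ in $L^2_T V_1\cap L^\infty_T L^2$ and $L^2_T W_1\cap L^\infty_T L^2$ respectively, which follow at once from \eqref{bm}--\eqref{bb1} and reflexivity. Then, since $\rho_0^\epsilon\to\rho_0^\epsilon$ trivially and \eqref{bapdensity}, \eqref{bu1} hold, part (1) of Theorem \ref{Lions} gives $\rho_n\to\rho^\epsilon$ in $\mathcal{C}([0,T];L^p(\Omega))$ for all $p<\infty$ together with the limiting transport equation $\partial_t\rho^\epsilon+\div(\rho^\epsilon\mathbf{u}^\epsilon)=0$; note the uniform lower bound $\underline\rho\le\rho_n\le C_0$ is preserved in the limit, which is crucial for controlling the Hall term's denominator.

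Next I would verify the hypothesis of part (2): the bound $\|{<}\partial_t(\rho_n\mathbf{u}_n),\Phi{>}\|\le C\|\Phi\|_{L^2_T H^l}$ for $l>\tfrac32$, which is exactly the three estimates already displayed in the excerpt (the convective term bounded by $\|\Phi\|_{L^2_T H^{3/2}}$ using $H^1\hookrightarrow L^6$ and $H^{1/2}\hookrightarrow L^3$; the viscous term by $\|\Phi\|_{L^2_T H^1}$; and the Lorentz term by $\|\Phi\|_{L^2_T H^s}$, $s>\tfrac32$, using $H^s\hookrightarrow L^\infty$). Part (2) then yields $\rho_n\mathbf{u}_n\to\rho^\epsilon\mathbf{u}^\epsilon$ in $L^q_T L^r$ for $2<q<\infty$, $1\le r<\tfrac{6q}{3q-4}$, and $\mathbf{u}_n\to\mathbf{u}^\epsilon$ in $L^\theta_T L^{3\theta}$ for $1\le\theta<2$ (here the set $\{\rho>0\}$ in Theorem \ref{Lions}(2) is all of $(0,T)\times\Omega$ because $\rho^\epsilon\ge\underline\rho>0$). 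For $\mathbf{B}_n$, testing \eqref{apwfb} against $\Psi\in L^4_T H^2$ and using \eqref{bb}, \eqref{bb1} shows $\partial_t\mathbf{B}_n$ bounded in $L^{4/3}_T H^{-2}$; with \eqref{bb1} in hand, Aubin--Lions gives $\mathbf{B}_n\to\mathbf{B}^\epsilon$ strongly in $L^2_T L^2$.

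With these convergences, I would pass to the limit term by term exactly as in \cite{GB97} for the Navier--Stokes-type terms: $\rho_n\mathbf{u}_n\otimes\mathbf{u}_n\to\rho^\epsilon\mathbf{u}^\epsilon\otimes\mathbf{u}^\epsilon$ follows from \eqref{cmo}, \eqref{scu}; $\mu^\epsilon(\rho_n)d(\mathbf{u}_n)\rightharpoonup\mu^\epsilon(\rho^\epsilon)d(\mathbf{u}^\epsilon)$ from strong convergence of $\rho_n$ (hence of $\mu^\epsilon(\rho_n)$, $\mu^\epsilon$ being continuous and bounded) and weak convergence of $d(\mathbf{u}_n)$; the Lorentz force $\curl\mathbf{B}_n\times\mathbf{B}_n$ converges because $\curl\mathbf{B}_n\rightharpoonup\curl\mathbf{B}^\epsilon$ in $L^2_{t,x}$ while $\mathbf{B}_n\to\mathbf{B}^\epsilon$ strongly in $L^2_{t,x}$ and is bounded in $L^\infty_T L^2$. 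The decisive step, and the one I expect to be the main obstacle, is the Hall term: using the algebraic splitting already written in the excerpt, the first piece is controlled by $\|\rho^\epsilon-\rho_n\|_{L^\infty_{t,x}}$ via \eqref{cdensity} (here the uniform positivity of $\rho_n,\rho^\epsilon$ is essential to bound $1/(\rho_n\rho^\epsilon)$), the second by the $L^2_{t,x}$-strong convergence of $\mathbf{B}_n$, and the third is a genuine weak-times-strong pairing $(\curl\mathbf{B}_n-\curl\mathbf{B}^\epsilon)\rightharpoonup 0$ in $L^2_{t,x}$ against $\mathbf{B}^\epsilon/\rho^\epsilon\in L^2_T L^6$. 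Finally, the convergence of the approximate initial data $\rho_0^\epsilon\mathbf{u}_{0,n}^\epsilon\to\mathbf{m}_0^\epsilon$ and $\mathbf{B}_{0,n}^\epsilon\to\mathbf{B}_0^\epsilon$ in $L^2(\Omega)$ is immediate from $P_{\mathcal{V}_n}\to\mathrm{Id}$, $P_{\mathcal{W}_n}\to\mathrm{Id}$ strongly; passing to the limit in \eqref{apein} by weak lower semicontinuity of the norms yields \eqref{enineps}, and \eqref{apemeas} passes to the limit from \eqref{apmeas} using $\rho_n\to\rho^\epsilon$ in $L^1$ (so that, along a subsequence, $1_{[\alpha,\beta]}(\rho_n)\to 1_{[\alpha,\beta]}(\rho^\epsilon)$ a.e. off the level sets, which have measure zero for a.e. $\alpha,\beta$ — or, more robustly, by the same $\gamma_m$ truncation argument applied directly to the limit equation).
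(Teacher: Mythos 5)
Your proposal follows essentially the same route as the paper: uniform energy bounds, both parts of Lions' compactness theorem (with the same three dual estimates on $\partial_t(\rho_n\mathbf{u}_n)$), the $L^{4/3}(0,T;H^{-2})$ bound on $\partial_t\mathbf{B}_n$ plus Aubin--Lions for strong $L^2$ convergence of $\mathbf{B}_n$, the identical three-term splitting of the Hall nonlinearity, and weak lower semicontinuity for the energy inequality. The only slight imprecision is invoking $\|\rho^\epsilon-\rho_n\|_{L^\infty_{t,x}}$ for the first piece of the Hall term, whereas \eqref{cdensity} only gives convergence in $\mathcal{C}([0,T];L^p(\Omega))$ for finite $p$ (which suffices via H\"older, as the paper uses to conclude convergence in $L^1(0,T;L^{6/5}(\Omega))$).
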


\subsection{Pass to the limit as \texorpdfstring{$\epsilon\to0$}{TEXT}}
For this passage to the limit, there is no  additional difficulty compare to the previous step since $\rho^\epsilon$ is still bounded away from zero by $\underline{\rho}$ uniformly. 
In particular,  from energy inequality \eqref{enineps} we have as $\epsilon\to0,$
\begin{align*}
&\mathbf{u}^\epsilon\rightharpoonup\mathbf{u}\quad{\rm{weakly}}\quad{\rm{in}}\quad L^2(0, T; V_1),\quad\mathbf{u}^\epsilon\rightharpoonup\mathbf{u}\quad{\rm{weakly^*}}\quad{\rm{in}}\quad L^\infty(0, T; V)\\
&\mathbf{B}^\epsilon\rightharpoonup\mathbf{B}\quad{\rm{weakly}}\quad{\rm{in}}\quad L^2(0, T; W_1), \quad\mathbf{B}^\epsilon\rightharpoonup\mathbf{B}\quad{\rm{weakly^*}}\quad{\rm{in}}\quad L^\infty(0, T; W),
\end{align*}
and by Theorem \ref{Lions}
\begin{align}
&\rho^\epsilon\to\rho\quad{\rm{in}}\quad\mathcal{C}([0, T]; L^p(\Omega))\quad{\rm {with} }\quad1\leq p<\infty\label{convdensity},\\
&{\rho^\epsilon}\mathbf{u}^\epsilon\to{\rho}\mathbf{u}\quad{\rm{in}}\quad L^q(0, T; L^r(\Omega))\quad{\rm{for}}~2<q<\infty,~1\leq r<\frac{6q}{3q-4}\notag,\\
&\mathbf{u}^\epsilon\to\mathbf{u}\quad{\rm{in}}\quad L^\theta(0, T; L^{3\theta}(\Omega))\quad{\rm{for}}~1\leq\theta<2\notag.
\end{align}
Moreover, again by Aubin-Lions lemma,
\begin{equation*}
\mathbf{B}^\epsilon\to\mathbf{B}\quad{\rm{in}}\quad L^2(0, T; L^2(\Omega)).
\end{equation*}
Just remark that $\rho^\epsilon$ bounded below by $\underline{\rho}$ is essential for getting uniform bound of $\partial_t\mathbf{B}^\epsilon$ in $L^\frac43(0, T; H^{-2}).$

In view of the construction of $\mu^\epsilon,~\sigma^\epsilon,$  one has
\begin{align*}
\mu^\epsilon\to\mu\quad{\rm{in}}\quad\mathcal{C}([0, T]; L^p(\Omega)),\quad\forall~1\leq p\leq\infty,
\end{align*}
\begin{align*}
\sigma^\epsilon\to\sigma\quad{\rm{in}}\quad\mathcal{C}([0, T]; L^p(\Omega)),\quad\forall~1\leq p\leq\infty.
\end{align*}
Hence, the above convergence properties with convergence for initial values ensure the existence part of Theorem \ref{Th_1}.
We now consider the initial value of energy inequality \eqref{enineps}. We observe that since $\nabla\times\mathbf{q}_0^\epsilon=0,$ then
\begin{align*}
\int_{\Omega}\rho_0^\epsilon|\mathbf{u}_{0}^\epsilon|^2\,dx=&\int_{\Omega}\frac{|\mathbf{m}_0^\epsilon-\mathbf{q}_0^\epsilon|^2}{\rho_0^\epsilon}\,dx\\
=&\int_{\Omega}\Bigl(\frac{|\mathbf{m}_0^\epsilon|^2}{\rho_0^\epsilon}-\frac{|\mathbf{q}_0^\epsilon|^2}{\rho_0^\epsilon}-2\mathbf{u}_0^\epsilon\cdot\mathbf{q}_0^\epsilon\Bigr)\,dx\\
=&\int_{\Omega}\Bigl(\frac{|\mathbf{m}_0^\epsilon|^2}{\rho_0^\epsilon}-\frac{|\mathbf{q}_0^\epsilon|^2}{\rho_0^\epsilon}\Bigr)\,dx.
\end{align*}

Thanks to convergence property \eqref{iniapmo}, letting $\epsilon\to0$ in \eqref{enineps} leads to the energy inequality \eqref{en2}.
Look at \eqref{apemeas}, an $\epsilon/2$ argument with convergence properties \eqref{pinidensity}, \eqref{convdensity} for $\rho_0^\epsilon$ and $\rho_0^\epsilon$ then implies\eqref{meas}.

To prove that $\mathbf{B}\in\mathcal{C}_w([0, T]; L^2(\Omega)),$ one first need to notice that $\partial_t\mathbf{B}$ lies in $L^1(0, T; H^{-2}),$ in particular $\mathbf{B}$ is almost everywhere equal to a continuous function  from  $[0, T]$ into $H^{-2}(\Omega).$ Finally, $\mathbf{B}\in L^\infty(0, T; L^2)$ and $H^2(\Omega)$ is dense in $L^2(\Omega)$ imply that $\mathbf{B}$ is weakly continuous from $[0, T]$ into $L^2(\Omega).$ $\mathbf{u}\in\mathcal{C}_w([0, T]; L^2(\Omega))$ based on similar argument.

This concludes the proof of Theorem \ref{Th_1}.\hfill\quad$\square$

\section{Weak-strong uniqueness}
In this section, we prove a weak-strong uniqueness property for global weak solutions obtained from Theorem \ref{Th_1}. 
We first remark that, in view of regularities $(\hat\rho, \mathbf{\hat u}, \mathbf{\hat B})\in\mathcal{C}([0, T]\times\Omega)$ and \eqref{regularity-ws} i.e.
\begin{align*}
\nabla\hat\rho, ~\partial_t\mathbf{\hat u}, ~\partial_t\mathbf{\hat B}\in L^2(0, T; L^3(\Omega))\quad{\rm{and}}\quad\nabla\mathbf{\hat u}, \nabla\mathbf{\hat B}\in L^2(0, T; L^\infty(\Omega)),
\end{align*}
we  actually could take $\mathbf{\hat u}$ and $\mathbf{\hat B}$ as test functions in the weak formulations \eqref{wfu}, \eqref{wfb}, and then get the following equalities for all $t\in (0, T)$
\begin{align}
&\int_{\Omega}\rho\mathbf{u}\cdot\mathbf{\hat u}\,dx+2\int_0^t\int_{\Omega}\mu(\rho) d(\mathbf{u})\cdot d(\mathbf{\hat u})\,dxds-\int_0^t\int_{\Omega}(\curl\mathbf{B}\times\mathbf{B})\cdot\mathbf{\hat u}\,dxds\notag\\
&\quad=\int_{\Omega}\mathbf{m}_0\cdot\mathbf{\hat u}(0, x)\,dx+\int_0^t\int_{\Omega}\rho\mathbf{u}\cdot(\partial_s\mathbf{\hat u}+u\cdot\nabla\mathbf{\hat u})\,dxds\label{wse1},
\end{align}
\begin{align}
&\int_{\Omega}\mathbf{B}\cdot\mathbf{\hat B}\,dx+\int_0^t\int_{\Omega}\bigl(h\dfrac{\curl\mathbf{B}\times\mathbf{B}}{\rho}+\dfrac{\curl\mathbf{B}}{\sigma(\rho)})\cdot\curl\mathbf{\hat B}\bigr)\,dxds\notag\\
&\quad=\int_{\Omega}|\mathbf{B}_0|^2dx+\int_0^t\int_{\Omega}\mathbf{B}\cdot\partial_s\mathbf{\hat B}\,dxds-\int_0^t\int_{\Omega}(\curl\mathbf{\hat B}\times\mathbf{B})\cdot\mathbf{u}\,dxds\label{wse2}.
\end{align}
Above, we have used the following two vector identities
\begin{align*}
&(\rho\mathbf{u}\otimes\mathbf{u})\cdot\nabla\mathbf{\hat u}=\rho\mathbf{u}\cdot(\mathbf{u}\cdot\nabla\mathbf{\hat u}),\\
&(\mathbf{B}\times\mathbf{u})\cdot\curl\mathbf{\hat B}=(\curl\mathbf{\hat B}\times\mathbf{B})\cdot\mathbf{u}.
\end{align*}
Let us notice that there exists a gradient term $\nabla\hat P$ that belongs to $L^1(0, T; L^\infty(\Omega))$ $+  ~L^2(0, T; H^{-1}(\Omega))$ coupled with $(\hat\rho, \mathbf{\hat u}, \mathbf{\hat B}).$
Next we write
\begin{align}
&\rho(\partial_t{\mathbf{\hat u}}+\mathbf{u}\cdot\nabla\mathbf{\hat u})-\div(2\mu(\rho) d(\mathbf{\hat u}))+\nabla\hat P- \curl \mathbf{\hat B}\times \mathbf{\hat B}\notag\\
&\quad =(\rho-\hat\rho)(\partial_t{\mathbf{\hat u}}+\mathbf{\hat u}\cdot\nabla\mathbf{\hat u})+\rho(\mathbf{u}-\mathbf{\hat u})\cdot\nabla\mathbf{\hat u}-\div\bigl(2(\mu(\rho)-\mu(\hat \rho))d(\mathbf{\hat u})\bigr),\label{wse3}\\
&\partial_t{ \mathbf{\hat B}}+\curl(\mathbf{\hat B}\times \mathbf{\hat u}+{h}\dfrac{\curl \mathbf{\hat B}\times \mathbf{\hat B}}{\hat\rho})+\curl(\dfrac{\curl \mathbf{\hat B}}{\sigma(\hat\rho)})=0\label{wse4},
\end{align}
and if we multiply \eqref{wse3}, \eqref{wse4} by $\mathbf{u},~\mathbf{B},$ respectively, and integrate over $(0, t)\times\Omega,$ then we have by integrations by parts
\begin{align}
&\int_0^t\int_{\Omega}\rho\mathbf{u}\cdot(\partial_s\mathbf{\hat u}+u\cdot\nabla\mathbf{\hat u})\,dxds+2\int_0^t\int_{\Omega}\mu(\rho)d(\mathbf{\hat u})\cdot d(\mathbf{u})\,dxds\notag\\
&\quad=\int_0^t\int_{\Omega}\Bigl(\curl \mathbf{\hat B}\times \mathbf{\hat B}+(\rho-\hat\rho)(\partial_s{\mathbf{\hat u}}+\mathbf{\hat u}\cdot\nabla\mathbf{\hat u})+\rho(\mathbf{u}-\mathbf{\hat u})\cdot\nabla\mathbf{\hat u}\Bigr)\cdot\mathbf{u}\,dxds\notag\\
&\quad\quad+\int_0^t\int_{\Omega}2(\mu(\rho)-\mu(\hat\rho))d(\mathbf{\hat u})\cdot d(\mathbf{u})\,dxds\label{wse00},
\end{align}
and
\begin{align}
&\int_0^t\int_{\Omega}\partial_s\mathbf{\hat B}\cdot \mathbf{B}+\dfrac{\curl\mathbf{\hat B}}{\sigma(\rho)}\cdot\curl\mathbf{B}\,dxds\notag\\
&\quad=\int_0^t\int_{\Omega} \Bigl((\sigma(\hat\rho)-\sigma(\rho))\dfrac{\curl\mathbf{\hat B}} {\sigma(\rho)\sigma(\hat\rho)}\cdot\curl\mathbf{B}-(\mathbf{\hat B}\times \mathbf{\hat u}+{h}\dfrac{\curl \mathbf{\hat B}\times \mathbf{\hat B}}{\hat\rho})\cdot\curl\mathbf{B}\Bigr)\,dxds\label{wse000}.
\end{align}
Bringing \eqref{wse00}, \eqref{wse000} into \eqref{wse1}, \eqref{wse2}, we obtain
\begin{align}
&\int_{\Omega}(\rho\mathbf{u}\cdot\mathbf{\hat u}+\mathbf{B}\cdot\mathbf{\hat B})\,dx+\int_0^t\int_{\Omega}\Bigl(4\mu(\rho) d(\mathbf{u})\cdot d(\mathbf{\hat u})+2\dfrac{\curl\mathbf{\hat B}}{\sigma(\rho)}\cdot\curl\mathbf{B}\Bigr)\,dxds\\
&\quad=\int_{\Omega}(|\mathbf{m}_0|^2{\rho_0}+|\mathbf{B}_0|^2)\,dx\notag\\
&\quad\quad+\int_0^t\int_{\Omega}\Bigl((\curl\mathbf{B}-\curl\mathbf{\hat B})\times(\mathbf{B}-\mathbf{\hat B})\cdot\mathbf{\hat u}-(\curl\mathbf{\hat B}\times(\mathbf{B}-\mathbf{\hat B}))\cdot(\mathbf{u}-\mathbf{\hat u})\notag\\
&\quad\quad +h\dfrac{(\curl\mathbf{B}-\curl\mathbf{\hat B})\times(\mathbf{\hat B}-\mathbf{B})}{\hat\rho}\cdot\curl\mathbf{\hat B}\notag\\
&\quad\quad+h(\rho-\hat\rho)\dfrac{(\curl\mathbf{B}-\curl\mathbf{\hat B})\times\mathbf{B}}{\rho\hat\rho}\cdot\curl\mathbf{\hat B}\notag\\
&\quad\quad+(\sigma(\hat\rho)-\sigma(\rho))\dfrac{\curl\mathbf{\hat B}} {\sigma(\rho)\sigma(\hat\rho)}\cdot\curl\mathbf{B}\notag\\
&\quad\quad+\bigl((\rho-\hat\rho)(\partial_s{\mathbf{\hat u}}+\mathbf{\hat u}\cdot\nabla\mathbf{\hat u})+\rho(\mathbf{u}-\mathbf{\hat u})\cdot\nabla\mathbf{\hat u}\bigr)\cdot\mathbf{u}\notag\\
&\quad\quad+2(\mu(\rho)-\mu(\hat\rho))d(\mathbf{\hat u})\cdot d(\mathbf{u})\Bigr)\,dxds\label{000}.
\end{align}
Then we multiply \eqref{wse3}, \eqref{wse4} by $\mathbf{\hat u},~\mathbf{\hat B}$ respectively, and integrate over $(0, t)\times\Omega$ to find
\begin{align}
&\frac12\int_{\Omega}\rho|\mathbf{\hat u}|^2\,dx+2\int_0^t\int_{\Omega}\mu(\rho) | d(\mathbf{\hat u})|^2\,dxds\notag\\
&\quad= \frac12\int_{\Omega}\dfrac{|\mathbf{m_0}|^2}{\rho_0}\,dx+\int_0^t\int_{\Omega}\Bigl(\bigl(\curl \mathbf{\hat B}\times \mathbf{\hat B}+(\rho-\hat\rho)(\partial_s{\mathbf{\hat u}}+\mathbf{\hat u}\cdot\nabla\mathbf{\hat u})+\rho(\mathbf{u}-\mathbf{\hat u})\cdot\nabla\mathbf{\hat u}\bigr)\notag\\
&\quad\quad\cdot\mathbf{\hat u}+2(\mu(\rho)-\mu(\hat \rho)) | d(\mathbf{\hat u})|^2\Bigr)\,dxds,\label{wse5}\\
&\frac12\int_{\Omega}|\mathbf{\hat B}|^2\,dx+\int_0^t\int_{\Omega}\dfrac{| \curl\mathbf{\hat B}|^2}{\sigma{(\rho)}}\,dxds\notag\\
&\quad=\frac12\int_{\Omega}|\mathbf{B_0}|^2\,dx-\int_0^t\int_{\Omega}\Bigl((\curl\mathbf{\hat B}\times\mathbf{\hat B})\cdot\mathbf{\hat u}-(\sigma(\hat\rho)-\sigma(\rho))\dfrac{|\curl \mathbf{\hat B}|^2}{\sigma(\rho)\sigma(\hat\rho)}\Bigr)\,dxds\label{wse6}.
\end{align}
Finally, we add up \eqref{wse5}, \eqref{wse6} and energy inequality \eqref{en2}, and subtract \eqref{000},  thanks to \eqref{positivemu}, \eqref{positivesigma} and \eqref{meas}, we have
\begin{align}
&\frac12(\underline{\rho}\|\mathbf{u}-\mathbf{\hat u}\|_{L^2}^2+\|\mathbf{B}-\mathbf{\hat B}\|_{L^2}^2)+\int_0^t\bigl(\underline{\mu}\|\nabla\mathbf{u}-\nabla\mathbf{\hat u}\|_{L^2}^2+\frac{1}{\bar\sigma}{\|\curl\mathbf{B}- \curl\mathbf{\hat B}\|_{L^2}^2}\bigr)\,ds\notag\\
&\quad\leq\int_0^t\int_{\Omega}\Bigl((\curl\mathbf{\hat B}-\curl\mathbf{B})\times(\mathbf{B}-\mathbf{\hat B})\cdot\mathbf{\hat u}+(\curl\mathbf{\hat B}\times(\mathbf{B}-\mathbf{\hat B}))\cdot(\mathbf{u}-\mathbf{\hat u})\notag\\
&\quad\quad -h\dfrac{(\curl\mathbf{B}-\curl\mathbf{\hat B})\times(\mathbf{\hat B}-\mathbf{B})}{\hat\rho}\cdot\curl\mathbf{\hat B}\notag\\
&\quad\quad-h(\rho-\hat\rho)\dfrac{(\curl\mathbf{B}-\curl\mathbf{\hat B})\times\mathbf{B}}{\rho\hat\rho}\cdot\curl\mathbf{\hat B}\notag\\
&\quad\quad+(\sigma(\hat\rho)-\sigma(\rho))\dfrac{\curl\mathbf{\hat B}} {\sigma(\rho)\sigma(\hat\rho)}\cdot(\curl\mathbf{\hat B}-\curl\mathbf{B})\notag\\
&\quad\quad+\bigl((\rho-\hat\rho)(\partial_s{\mathbf{\hat u}}+\mathbf{\hat u}\cdot\nabla\mathbf{\hat u})+\rho(\mathbf{u}-\mathbf{\hat u})\cdot\nabla\mathbf{\hat u}\bigr)\cdot(\mathbf{\hat u}-\mathbf{u})\notag\\
&\quad\quad+2(\mu(\rho)-\mu(\hat\rho))d(\mathbf{\hat u})\cdot d(\mathbf{\hat u}-\mathbf{u})\Bigr)\,dxds\notag.
\end{align}
Hence, by H{\"o}lder inequality,  we have
\begin{align}
&\frac12(\underline{\rho}\|\mathbf{u}-\mathbf{\hat u}\|_{L^2}^2+\|\mathbf{B}-\mathbf{\hat B}\|_{L^2}^2)+\int_0^t\bigl(\underline{\mu}\|\nabla\mathbf{u}-\nabla\mathbf{\hat u}\|_{L^2}^2+\frac{1}{\bar\sigma}{\|\curl\mathbf{B}- \curl\mathbf{\hat B}\|_{L^2}^2}\bigr)\,ds\notag\\
&\quad\leq C\int_0^t \Bigl( \|\mathbf{B}-\mathbf{\hat B}\|_{L^2} \|\curl\mathbf{B}-\curl\mathbf{\hat B}\|_{L^2}\|\curl\mathbf{\hat B}\|_{L^\infty}\|\frac{1}{\hat\rho}\|_{L^\infty}( 1+\|\rho\|_{L^\infty}+\|\hat\rho\|_{L^\infty})\notag\\
&\quad\quad+\|\mathbf{B}-\mathbf{\hat B}\|_{L^2} \|\curl\mathbf{B}-\curl\mathbf{\hat B}\|_{L^2}\|\mathbf{\hat u}\|_{L^\infty}+\|\mathbf{B}-\mathbf{\hat B}\|_{L^2} \|\mathbf{u}-\mathbf{\hat u}\|_{L^2}\|\curl\mathbf{\hat B}\|_{L^\infty}\notag\\
&\quad\quad+\|\rho-\hat\rho\|_{L^2}\|\mathbf{u}-\mathbf{\hat u}\|_{L^6}\bigl(\|\partial_s\mathbf{\hat u}\|_{L^3}+\|\mathbf{\hat u}\|_{L^3}\|\nabla\mathbf{\hat u}\|_{L^\infty}\bigr)+\|\rho\|_{L^\infty} \|\mathbf{u}-\mathbf{\hat u}\|_{L^2}^2\|\nabla\mathbf{\hat u}\|_{L^\infty}\notag\\
&\quad\quad+\|\rho-\hat\rho\|_{L^2}\|\curl\mathbf{B}-\curl\mathbf{\hat B}\|_{L^2}\|\curl\mathbf{\hat B}\|_{L^\infty}+\|\rho-\hat\rho\|_{L^2}\|\mathbf{u}-\mathbf{\hat u}\|_{L^2}\|\mathbf{\hat u}\|_{L^\infty}\notag\\
&\quad\quad+\|\rho-\hat\rho\|_{L^2}\|\curl\mathbf{B}-\curl\mathbf{\hat B}\|_{L^2}\|\frac{1}{\hat\rho}\|_{L^\infty}(1+\|\mathbf{\hat B}\|_{L^\infty}\|\curl\mathbf{\hat B}\|_{L^\infty})\Bigr)\,ds\label{wseub},
\end{align}
in which we have specially estimate
\begin{align*}
&\int_{\Omega} h(\rho-\hat\rho)\dfrac{(\curl\mathbf{B}-\curl\mathbf{\hat B})\times\mathbf{B}}{\rho\hat\rho}\cdot\curl\mathbf{\hat B}\,dx\\
&\quad=\int_{\Omega} h(\rho-\hat\rho)\bigl(\dfrac{(\curl\mathbf{B}-\curl\mathbf{\hat B})\times(\mathbf{B}-\mathbf{\hat B})}{\rho\hat\rho}+\dfrac{(\curl\mathbf{B}-\curl\mathbf{\hat B})\times\mathbf{\hat B}}{\rho\hat\rho}\bigr)\cdot\curl\mathbf{\hat B}\,dx\\
&\quad\leq h\|\frac{1}{\hat\rho}\|_{L^\infty}\|\frac{1}{\rho}\|_{L^\infty}\|\curl\mathbf{B}-\curl\mathbf{\hat B}\|_{L^2}\bigl((\|\rho\|_{L^\infty}+\|\hat\rho\|_{L^\infty})\|\mathbf{B}-\mathbf{\hat B}\|_{L^2}\\
&\quad\qquad\quad+\|\rho-\hat\rho\|_{L^2}\|\mathbf{\hat B}\|_{L^\infty}\|\curl\mathbf{\hat B}\|_{L^\infty}\bigr).
\end{align*}
Now, we apply Young's inequality and sometimes Sobolev embeddings to \eqref{wseub} and deduce from the assumptions on $(\hat\rho, \mathbf{\hat u}, \mathbf{\hat B})$ that for all $t\in(0, T)$
\begin{align*}
&\frac12(\underline{\rho}\|\mathbf{u}-\mathbf{\hat u}\|_{L^2}^2+\|\mathbf{B}-\mathbf{\hat B}\|_{L^2}^2)+\int_0^t\bigl(\underline{\mu}\|\nabla\mathbf{u}-\nabla\mathbf{\hat u}\|_{L^2}^2+\frac{1}{\bar\sigma}{\|\curl\mathbf{B}- \curl\mathbf{\hat B}\|_{L^2}^2}\bigr)\,ds\notag\\
&\quad\leq \int_0^t C(s)\bigl(\|\mathbf{u}-\mathbf{\hat u}\|_{L^2}^2+\|\mathbf{B}-\mathbf{\hat B}\|_{L^2}^2+\|\rho-\hat\rho\|_{L^2}^2\bigr)\,ds,
\end{align*}
where $C(s)$ denotes a various measurable function in $L^1(0, T).$ 

It remains to estimate $\|\rho-\hat\rho\|_{L^2}$ as in \cite{PL96}. In fact, we have
\begin{equation*}
\frac12\|\rho-\hat\rho\|_{L^2}^2\leq\int_0^t \|\mathbf{\hat u}-\mathbf{u}\|_{L^6}\|\nabla\hat\rho\|_{L^3}\|\rho-\hat\rho\|_{L^2}\,ds.
\end{equation*}
Combining above two estimates, we write
\begin{align*}
&\frac12(\underline{\rho}\|\mathbf{u}-\mathbf{\hat u}\|_{L^2}^2+\|\mathbf{B}-\mathbf{\hat B}\|_{L^2}^2+\|\rho-\hat\rho\|_{L^2}^2)\\
&\hspace*{2cm}+\int_0^t\bigl(\underline{\mu}\|\nabla\mathbf{u}-\nabla\mathbf{\hat u}\|_{L^2}^2+\frac{1}{\bar\sigma}{\|\curl\mathbf{B}- \curl\mathbf{\hat B}\|_{L^2}^2}\bigr)\,ds\notag\\
&\quad\leq \int_0^t C(s)\bigl(\|\mathbf{u}-\mathbf{\hat u}\|_{L^2}^2+\|\mathbf{B}-\mathbf{\hat B}\|_{L^2}^2+\|\rho-\hat\rho\|_{L^2}^2\bigr)\,ds,
\end{align*}
and conclude by applying Gr{\"o}nwall's lemma  that $(\hat\rho, \mathbf{\hat u}, \mathbf{\hat B})\equiv (\rho, \mathbf{ u}, \mathbf{ B})$ a.e. in $(0, T)\times\Omega.$ \hfill\quad$\square$\\
\medskip


\textbf{Acknowledgment.} {The author would like to thank Prof. R{a}pha\"{e}l Danchin for his wise suggestions and remarks.
This research is partly funded by the B{\'e}zout Labex and ANR, reference ANR-10-LABX-58. }

\end{document}